\documentclass[11pt]{amsart}
\usepackage{enumerate,amssymb,amsmath,latexsym,verbatim,color}
\newcommand\RR{\mathbb{R}}

\newcommand\ZZ{\mathbb{Z}}

\newcommand\CI{{\mathcal C}^{\infty}}

\newcommand\pa{\partial}
\newcommand{\del}{\partial}
\newcommand{\calM}{\mathcal M}
\newcommand{\calO}{\mathcal O}
\newcommand{\calMb}{\overline{\calM}}
\newcommand{\WP}{\mathrm{WP}} 
\newcommand{\calC}{\mathcal C}
\newcommand{\calD}{\mathcal D}
\newcommand{\calU}{\mathcal U}
\newcommand{\calV}{\mathcal V}
\newcommand{\calW}{\mathcal W}

\newcommand{\orb}{\mathrm{orb}}

\newcommand\Ran{\operatorname{Ran}}

\newcommand\ep{\epsilon}

\newcommand\supp{\operatorname{supp}}
\newcommand\loc{\mathrm{loc}}

\newcommand\cM{\mathcal{M}}
\newcommand\cU{\mathcal{U}}


\newtheorem{thm}{Theorem}

\newtheorem{lem}[]{Lemma}
\newtheorem{cor}[]{Corollary}

\newtheorem{rem}[]{Remark}

\newcounter{mnotecount}[section]

\title[Weil-Petersson spectral theory]{Spectral theory \\ for the Weil-Petersson Laplacian \\ on the Riemann moduli space}
\author{Lizhen Ji, Rafe Mazzeo, Werner M\"uller and Andras Vasy}

\subjclass[2010]{58J50, 58J05, 35J75}
\date{June 16, 2012}
\begin{document}

\maketitle

\begin{abstract}
We study the spectral geometric properties of the scalar Laplace-Beltrami operator associated to the 
Weil-Petersson metric $g_{\WP}$ on $\calM_\gamma$, the Riemann moduli space of surfaces of genus $\gamma > 1$. 
This space has a singular compactification with respect to $g_{\WP}$, and this metric has crossing cusp-edge singularities 
along a finite collection of simple normal crossing divisors.  We prove first that the scalar Laplacian is
essentially self-adjoint, which then implies that its spectrum is discrete. The second theorem is 
a Weyl asymptotic formula for the counting function for this spectrum. 
\end{abstract}

\section{Introduction}
This paper initiates the analytic study of the natural geometric operators associated to the Weil-Petersson metric 
$g_{\WP}$ on the Riemann moduli space of surfaces of genus $\gamma > 1$, denoted below by $\calM_\gamma$.  We 
consider the Deligne-Mumford compactification of this space, $\calMb_\gamma$, which is a stratified space with many 
special features. The topological and geometric properties of this space have been intensively investigated for many 
years, and $\calM_\gamma$ plays a central role in many parts of mathematics. 

This paper stands as the first step in a development of analytic techniques to study the natural elliptic differential 
operators associated to the Weil-Petersson metric. More generally, our results here apply to the wider class of
metrics with crossing cusp-edge singularities on certain stratified Riemannian pseudomanifolds.  This fits 
into a much broader study of geometric analysis on stratified spaces using the techniques of geometric
microlocal analysis. Some of this work is directed toward studying general classes of such spaces, 
while other parts are focused on specific problems arising on particular singular spaces which arise `in nature', 
such as compactifications of geometric moduli spaces, etc.  These approaches are, of course, closely intertwined.
The perspective of this paper is that $(\calM_\gamma, g_{\WP})$ is inherently interesting, and that the spectral theory 
of its Laplacian will most likely find interesting applications; at the same time, it is an interesting
challenge to develop analytic techniques which can be used to study other singular spaces with
related metric structures.

Our goals here are relatively modest. As stated above, we focus on the scalar Laplacian $\Delta$,
rather than any more complicated operator, associated to the Weil-Petersson metric on $\calM_\gamma$, 
and provide answers to the most basic analytic questions about this operator. 
\begin{thm}
The scalar Laplace operator $\Delta$ on $(\calM_\gamma, g_{\WP})$ is essentially self-adjoint, i.e.\ 
there is a unique self-adjoint extension from the core domain $\calC^\infty_{0,\orb}(\mathring \calM_\gamma)$.
The spectrum of this operator is discrete, and if $N(\lambda)$ denotes the number of eigenvalues of
$\Delta$ which are less than $\lambda$, then 
\[
N(\lambda) = \frac{\omega_n}{(2\pi)^n } \mbox{Vol}_{\WP}(\calM) \lambda^{n/2} + o(\lambda^{n/2}).
\]
as $\lambda \to \infty$.  Here, as usual, $\omega_n$ is the volume of the unit 
ball in $\RR^n$.
\end{thm}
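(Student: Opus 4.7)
The strategy is to exploit the local structure of $g_{\WP}$ near the Deligne-Mumford strata established by Masur, Wolpert, and Yamada: near a stratum of codimension $k$ there are coordinates $(r_1,\theta_1,\dots,r_k,\theta_k,y)$, with $r_i \geq 0$ defining the divisors, $\theta_i$ the Fenchel-Nielsen twists, and $y$ parametrizing a lower-complexity moduli space $(Y,g_Y)$, in which
\[
g_{\WP} = \sum_{i=1}^{k}\bigl(dr_i^2 + r_i^6\, d\theta_i^2\bigr) + g_Y + \text{(lower order terms)}.
\]
All three conclusions will be proved by induction on the depth of the stratum, with the inductive step driven by this model; orbifold issues are handled throughout by restricting to mapping-class-group-invariant functions in each local chart.

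For essential self-adjointness I would reduce, via a partition of unity, to the above model. Fourier-decomposing in each $\theta_i$ separates the transverse operator, which combines $\Delta_Y$ (essentially self-adjoint by induction) with a non-negative angular term, from the radial operator in $r_i$. The latter, acting in $L^2(r_i^3\,dr_i)$, is unitarily equivalent under the substitution $u = r_i^{-3/2} v$ to $-\pa_{r_i}^2 + \tfrac{3}{4}\,r_i^{-2}$ on $L^2(dr_i)$; since the coefficient $3/4$ meets Weyl's limit-point criterion at $0$, this operator is essentially self-adjoint, and a product-type closure argument completes the induction. The same Hardy-type estimate $\int |u|^2 r_i^{-2}\,dV_{\WP} \lesssim \int |\nabla u|^2_{\WP}\,dV_{\WP}$ that drives the limit-point analysis also yields a family of smooth cutoffs supported away from the strata whose gradients tend to zero in $L^2$, so that $\CI_{0,\orb}(\mathring{\calM}_\gamma)$ is dense in the unique self-adjoint domain. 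Discreteness of the spectrum then follows from the same ingredients: a Poincar\'e-type inequality on thin collars — again a consequence of Hardy — rules out concentration of mass near the strata for bounded sequences in the form domain, and combined with Rellich compactness on the smooth interior yields compact embedding of the form domain into $L^2(dV_{\WP})$.

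The Weyl law I would attack by Dirichlet-Neumann bracketing. For $\epsilon > 0$, write $\calMb_\gamma = K_\epsilon \cup U_\epsilon$ with $K_\epsilon$ a smooth compact region in the interior and $U_\epsilon$ a union of thin collars about the strata whose WP volume shrinks to $0$ as $\epsilon \to 0$. The classical Weyl law for smooth manifolds with boundary applied to $K_\epsilon$ yields the expected main term with $\mathrm{Vol}_{\WP}(K_\epsilon)$, so the task is to show that the collar contribution is $o(\lambda^{n/2})$ uniformly in $\epsilon$. Separating variables in the model reduces this to counting eigenvalues of a family of one-dimensional Bessel-type radial operators, where the $r^6\,d\theta^2$ factor produces a transverse centrifugal potential of order $r^{-6}$, coupled inductively to the spectrum of $(Y,g_Y)$. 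The main obstacle — and where I expect the real work — lies in the deep crossing strata: the product of several rapidly-degenerating cusp-edge factors gives a model whose spectral counting function must be estimated uniformly in $\epsilon$ and compatibly with the induction, and some delicate Tauberian bookkeeping is required to rule out any stratum contributing at the order $\lambda^{n/2}$. Granted this, letting $\epsilon \to 0$ and using $\mathrm{Vol}_{\WP}(K_\epsilon) \to \mathrm{Vol}_{\WP}(\calM_\gamma)$ closes the argument.
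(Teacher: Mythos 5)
Your high-level architecture (localize near the strata, use the $dr^2 + r^6\,d\theta^2$ cusp-edge model, Hardy inequalities, Dirichlet--Neumann bracketing) matches the paper's, and your computation that the radial part in $L^2(r^3\,dr)$ conjugates to $-\pa_r^2 + \tfrac34 r^{-2}$ is exactly the right heuristic. But there is a genuine gap in the self-adjointness argument: separation of variables and the ``product-type closure argument'' only apply to the \emph{exact} product model, whereas $g_{\WP}$ is merely asymptotic to that model, with cross terms ($g^{\rho\theta}$, $g^{\rho y}$, $\rho_i^{-1}\pa_{y_j}$-type first-order terms, etc.) that prevent the Laplacian from commuting with the $\pa_{\theta_i}$ or splitting as a sum of commuting operators. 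This is not a technicality you can wave away, precisely because your coefficient $3/4$ sits exactly at the limit-point/limit-circle threshold: an error term contributing an additional $-c'r^{-2}$ to the effective potential, with $c'>0$ arbitrarily small, would push the radial problem into the limit-circle regime and destroy essential self-adjointness. The entire technical content of the paper's proof is a quantitative treatment of these error terms --- weighted Hardy and interpolation inequalities, an iteration improving the decay exponent $\sigma$ through the critical window $(-\tfrac{k-2}{4},\tfrac{k-1}{2})$, and a borderline lemma at $\sigma_0=\tfrac{k-1}{2}$ that requires the perturbation to decay like $|\rho|^\eta$ with $\eta\geq 1$ when $k=3$ --- none of which appears in your outline. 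Your induction on stratum depth also does not obviously handle crossing strata, where several $\rho_i\to 0$ simultaneously and the model is a genuine multi-cusp product rather than a single radial factor times a smaller moduli space.

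The Weyl-law section has the right skeleton (bracketing, with the collar contribution made small as $\epsilon\to 0$), but the step you flag as ``the main obstacle'' --- bounding the Neumann counting function of the degenerate collar pieces by $C(\epsilon)\lambda^{n/2}$ with $C(\epsilon)\to 0$ --- is exactly where the paper's work lies and is left undone in your proposal. The paper resolves it not by Tauberian arguments but by a dyadic decomposition $\rho_i\in(2^{-j-1},2^{-j})$ of each radial variable down to scale $2^{-m}\sim\lambda^{-1/2}$, explicit lattice-point counting for the frozen-coefficient model on each dyadic block (the measure $\rho^k\,d\rho$ makes the sum over blocks converge geometrically, yielding the factor $\delta^{\ell+|k|}$), and the second, non-compactly-supported form of the Hardy inequality to absorb the terminal block $(0,2^{-m-1})$ into its neighbor. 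Without some such uniform counting argument the conditional ``granted this'' in your last paragraph leaves the theorem unproved.
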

\begin{rem}
There is a subtlety in the statement of this theorem which we point out immediately. The {\it interior} of the 
space $\calM_\gamma$ already has singularities, but these are caused not by any properties of the Weil-Petersson
metric, but rather are orbifold points which arise because the mapping class group does not act freely
on Teichm\"uller space.  While an analysis of the self-adjoint extensions near such points can
be carried out, we instead restrict to an easily defined core domain which fixes the nature of the self-adjoint
extension near these points.  This is defined as follows. If $p$ is a singular point in the interior of $\calM_\gamma$,
then there is a neighbourhood $\calU_p$ around $p$, an open set $\tilde{\calU}_p$ in $\RR^{N}$ ($N = 6\gamma-6$),
and a finite group $\Gamma_p$ which acts on $\tilde{\calU}_p$ such that $\tilde{\calU}_p/\Gamma_p = \calU_p$. 
We then define $\calC^\infty_{0}(\calM_\gamma)$ to consist of all functions $f$ such that the
restriction of $f$ to $\calU_p$ lifts to a $\calC^\infty$ function $\tilde{f}$ on $\tilde{\calU}_p$.  We refer
to \cite{DGGW} for more on this and related other analytic constructions on orbifolds.
Our main result then is that $\Delta$ is essentially self-adjoint on {\it this} core domain. 
The arguments in this paper are essentially all local (or at least localizable), which means that for all
analytic purposes, it suffices to assume that the interior is smooth and that the singular set of
the compactification is of crossing cusp-edge type, as described below, even though the actual
space is locally a finite quotient of this.  
\end{rem}

The emphasis on showing that a given operator is essentially self-adjoint is not so standard in
geometric analysis for the simple reason that this property is automatic for all `reasonable'
elliptic operators on any complete manifold, and when the issue actually arises, e.g.\ on manifolds
with boundary, it is so customary to impose boundary conditions that one rarely thinks of
this as choosing a self-adjoint extension.  On singular spaces, by contrast, the issue
becomes a very real one, and a key preliminary part of the analysis on any such space
is to determine whether the singular set is large enough, in some sense, to create the need
for the imposition of boundary values.  For other classes of singular spaces, e.g.\ those
with cones, edges, etc., this issue is well-understood. It is known that if the singular
set has codimension at least $4$, then there is no need to impose boundary conditions
for the Laplacian on functions.  For the Laplacian on differential forms, however, and for
these same types of `iterated edge metrics', the situation is more complicated, 
and was first considered carefully by Cheeger \cite{Ch}, \cite{Ch2}, see also \cite{ALMP}, \cite{ALMP2}
for some recent work on this.   However, the Weil-Petersson metric is more singular
than these spaces, which leads to the goals of this paper.

The proof of essential self-adjointness for any operator translates to a technical
problem of showing that any element in the maximal domain $\calD_{\max}$ of this operator
is necessarily in the minimal domain $\calD_{\min}$. We review the definitions of these
domains in the beginning of \S 3. This is simply a regularity statement: we wish to
show that any $u \in \calD_{\max}$ enjoys enough regularity and decay near the
singular set to allow us to prove that it can be approximated in graph norm by
elements of $\calC^\infty_{0}(\calM_\gamma)$.  The techniques used to
prove this regularity here are somewhat ad hoc, and in particular do not use any of the 
heavy microlocal machinery which has proved to be very helpful for the study of more 
detailed analytic questions on stratified spaces. The advantage, however, is that this
approach is much more self-contained. 

It is worth recalling the well-known fact that the Laplacian on $\RR^n$ is essentially
self-adjoint on the core-domain $\calC^\infty_0(\RR^n \setminus \{0\})$ if and only
if $n \geq 4$, see \cite{CdV}, and that the $4$-dimensional case has a borderline nature. 
The `radial part' of the Weil-Petersson Laplacian near a divisor is essentially the same 
as the radial part of the Laplacian on $\RR^4$, so in our setting too there are some
borderline effects in the analysis. This motivates our introduction of a slightly broader
class of crossing cusp-edge metrics of any order $k \geq 3$, for any of which
we carry out this analysis. This is intended to clarify the slightly more delicate
argument needed when $k=3$. We also mention the related work on metric
horns (without iterative structure) by Br\"uning, Lesch and
Peyerimhoff; see \cite{Bru, LePe}.

An immediate consequence of the equivalence $\calD_{\max} = \calD_{\min}$ for the scalar
Laplacian is the fact that this domain is compactly contained in $L^2$, which proves immediately
that the spectrum is discrete.  Our final result, concerning the Weyl asymptotics of the counting
function for this spectrum, employs the classical Dirichlet-Neumann bracketing method, hence
does not provide much information about the error term. 

In \S 2 we provide a brief (and sketchy) review of the key properties of the geometric structure of the 
Weil-Petersson metric and of the singular set of $\calMb_\gamma$.  The key fact, that the local lifts of
$g_{\WP}$ have `crossing cusp-edge' singularities of order $3$, leads to the introduction of 
the analogous class of metrics of any order $k \geq 3$. The rest of the paper then considers the scalar
Laplacian for any metric of this more general type.  Essential self-adjointness is studied in \S 3,
and the Weyl estimate is obtained in \S 4. 

\medskip

L.J., R.M. and A.V. gratefully to acknowledge NSF support through the grants DMS-1104696, 1105050 and 
0801226 \& 1068742, respectively. 

\section{The geometry of the Weil-Petersson metric}
We begin with a description of the singular structure of $\calMb_\gamma$ and the structure
of $g_{\WP}$ near the singular strata.   The results about the structure of the Deligne-Mumford
compactification itself are classical at this point, and we refer to \cite{HaMo}, 
\cite{ACG}  for  more detailed
descriptions of all of this and references.  
The form of the Weil-Petersson metric traces back to a paper of 
Masur  \cite{Mas} in the early 1970's, but a far more detailed picture has emerged through
the work of Yamada \cite{Yam} and Wolpert \cite{Wol1}, \cite{Wol2}. We point to two important 
recent survey articles \cite{Yam2} and \cite{Wol3} and the references therein.

The compact space $\calMb_\gamma$ is a complex space which is singular along the union of 
a collection $D_0, \ldots, D_{[\gamma/2]}$ of immersed divisors with simple normal crossing. 
Elements of $\calM_\gamma$ correspond to conformal structures on the underlying compact
surface $\Sigma$ of genus $\gamma$, where conformal structures are identified if they differ
by an arbitrary diffeomorphism of $\Sigma$. Another realization of this space is as the space
of hyperbolic metrics on $\Sigma$ identified by the same space of diffeomorphisms. By contrast,
Teichm\"uller space $\mathcal T_\gamma$ consists of the space of all conformal structures or
hyperbolic metrics identified only by the smaller group of diffeomorphisms of $\Sigma$ isotopic
to the identity.  Thus
\[
\calM_\gamma = \mathcal T_\gamma/\mathrm{Map}(\Sigma),
\]
where the so-called mapping class group $\mathrm{Map}(\Sigma)$ is a discrete group of automorphisms
of $\mathcal T_\gamma$ defined as the quotient of the group of all diffeomorphisms by the subgroup of 
those isotopic to the identity. 

Let $c_1, \ldots, c_N$  be a maximal collection of homotopically nontrivial 
disjoint simple closed curves on $\Sigma$. It is well-known that $N = 3\gamma-3$, and that
$\Sigma \setminus \{c_1, \ldots, c_N\}$ is a union of $2\gamma-2$ pairs
of pants, and moreover, exactly $[\gamma/2] + 1$ of the curves are distinct after identification 
by $\mathrm{Map}(\Sigma)$. 

There is a simple geometric meaning to each of the divisors. Let $D_j$ be the divisor associated
to an equivalence class of curves $[c]$ (i.e.\ curves in this equivalence class are identified, 
up to homotopy, by elements of $\mathrm{Map}\,(\Sigma)$). A sequence of points $p_i \in \calM_\gamma$ 
converges to $D_j$ if the geodesics freely homotopic to $c_j$ for the corresponding sequence of 
hyperbolic metrics have lengths $\ell(c_j) \to 0$. A crossing $D_{j_1} \cap \ldots \cap D_{j_\ell}$ 
corresponds to the independent length degeneration of some collection of equivalence classes of curves 
$c_{j_1}, \ldots, c_{j_\ell}$. In the following, we shall often denote such an $\ell$-fold 
intersection by $D_J$ where $J = \{j_1, \ldots, j_\ell\}$.  Each divisor $D_j$ can be identified with the 
Riemann moduli space for the (possibly disconnected) noded surface $\Sigma'$ obtained by 
pinching the curve $c_j$, or equivalently, by cutting $\Sigma$ along $c_j$ and 
identifying each of the boundaries, which are copies of $c_j$, to points. 

There are a number of natural and interesting metrics on Teichm\"uller space which are invariant
with respect to $\mathrm{Map}(\Sigma)$ and which thus descend to metrics on $\calM_\gamma$. 
One of the most fundamental is the Weil-Petersson metric, $g_{\WP}$, which is the one studied here.
It is incomplete on $\calM_\gamma$ and induces the corresponding Weil-Petersson metric on each 
of the divisors. It is simply the canonical $L^2$ inner product on tangent vectors:  if
$h$ is a hyperbolic metric on $\Sigma$ representing a point of $\calM_\gamma$, then the
tangent space $T_h \calM_\gamma$ is identified with the space of transverse-traceless symmetric
two-tensors $\kappa$ on $\Sigma$, i.e.\ $\mathrm{tr}^h \kappa = 0$ and $\delta^h \kappa = 0$. 
If $\kappa_1$ and $\kappa_2$ are two such tangent vectors, then
\[
\langle \kappa_1, \kappa_2 \rangle_{g_{\WP}} = \int_\Sigma \langle \kappa_1, \kappa_2 \rangle_h \, dA_h = 
\int_\Sigma  (\kappa_1)_{ij} (\kappa_2)_{k\ell} h^{ik} h^{j\ell}\, dA_h. 
\]

It is known that $\calM_{\gamma}$ is a complex orbifold and that $g_{\WP}$ is a K\"ahler metric 
with many interesting properties.  Our main concern is its fine asymptotic structure near the singular divisors, 
which are due to Yamada \cite{Yam} and Wolpert \cite{Wol4}, with closely related results by Liu, Sun and Yau \cite{LSY}. 
Let $p$ be a point in some $D_J$ and choose a local holomorphic coordinate chart $(z_1, \ldots, z_{3g-3})$ with 
$D_J = \{z_1 = \ldots = z_\ell = 0\}$. Setting $z_j = \rho_j e^{i \theta_j}$, $j \leq \ell$, then
\begin{equation}
g_{\mathrm{WP}} = \pi^3 \sum_{j=1}^\ell ( 4 d\rho_j^2 + \rho_j^6 d \theta_j^2)(1 + |\rho|^3) +  g_{D_J}  + 
\calO(|\rho|^3) 
\label{gwp1}
\end{equation}
where $g_{D_J}$ is the Weil-Peterson metric on $D_J$. The expression 
$\calO(|\rho|^3)$ indicates that all other terms are combinations of $d\rho_j$, $\rho_j^3d\theta_j$ and $dy$ (where $y$ is a local
coordinate along $D_J$) with coefficients vanishing at this rate.  This (and in fact a slightly sharper version)
is proved in \cite{Wol2}, and the same result with some further information on the first derivatives of the
metric components appears in \cite{LSY}.  

We do not belabor the precise form of the remainders in these asymptotics for the following two reasons.  
First, $g_{\WP}$ is K\"ahler, and we can invoke the standard fact in K\"ahler geometry, see \cite[p.252]{Aub}, that if
$g$ is any K\"ahler metric, then its Laplace operator has the particularly simple form 
\begin{equation}
\Delta_{g} = \sum g^{i \bar{\jmath}} \frac{\partial^2\,}{\partial z_i \partial \overline{z_j}}, 
\label{lapkahler}
\end{equation}
with no first order terms. In particular, the coefficients of this operator do not depend on derivatives of the metric. The
same is true for the Dirichlet form for this metric, which also involves only the components of 
the (co)metric, but not their derivatives.  The proof in \S 3 involves various integrations by parts, but a 
close examination of the details shows that one needs in any case very little about the derivatives 
of the metric, and for K\"ahler metrics one needs no information about these derivatives at all.

The other reason is that current work by the second author and J.\ Swoboda aims at deriving a complete 
asymptotic expansion for $g_{\WP}$, and this implies all the results needed here about the remainder terms 
and their derivatives. However, since that work has not yet appeared, we emphasize that enough is known 
about the asymptotics of the metric in the existing literature to justify all the calculations below. 

In any case, using \eqref{gwp1}, disregarding the constants $\pi^3$ and $4$ for simplicity, 
and using the product polar coordinates, $(\rho_1,\ldots, \rho_\ell, \theta_1, \ldots, \theta_\ell, y_1, \ldots, y_s) \in 
\calU = (0, \rho_0)^\ell \times (S^1)^\ell \times  \mathcal V$, where $\calV$ is an open neighbourhood in $D_J$,
we note that 
\[
\|u\|^2_{L^2}=\int |u|^2  \mathcal J \cdot (\rho_1 \ldots \rho_\ell)^3 \,d\rho \, d\theta \, dy, 
\]
where the Jacobian factor $\mathcal J$ is uniformly bounded and uniformly positive. Similarly, the Dirichlet form 
for $\Delta_{\WP}$ is given by 
\[
\int \left(\sum_{j=1}^\ell |\pa_{\rho_j}u|^2+\sum_{j=1}^k \rho_j^{-6}|\pa_{\theta_j}u|^2
+|\nabla_y u|^2\right)  \mathcal J \cdot (\rho_1 \ldots \rho_\ell)^3 \, d\rho \, d\theta \, dy
\]
modulo terms vanishing like $|\rho|^3$.  

In order to focus on the key analytic points of the argument, we shall work with a slightly more general 
class of (not necessarily K\"ahler) Riemannian metrics, the asymptotic structure of which models that 
of $g_{\WP}$, with singularities of similar crossing cusp-edge type.  We thus let $\calM$ be any manifold
which has a compactification $\overline{\calM}$ with the same structural features as the Deligne-Mumford
compactification. Specifically, $\overline{\calM}$ is a stratified pseudomanifold, with $\calM$ its dense
top-dimensional stratum. All other strata are of even codimension, and can be locally described as finite
intersections $D_{j_1} \cap \ldots \cap D_{j_\ell}$, where $\dim D_j = \dim \calM - 2$ for all $j$. The main
point is that we can use the same sort of product polar coordinate systems as above, and we shall do
so henceforth without comment.  We now consider metrics which in any such local coordinate system 
are modelled by the product metric
\begin{equation}
g_{\ell, k} := \sum_{i=1}^\ell (d\rho_i^2+\rho_i^{2k}d\theta_i^2) + \sum_{\mu = 1}^{n - 2\ell} dy_\mu^2,
\label{cuspedgek} 
\end{equation}
for any $k\geq 3$. The corresponding Laplacian is given by
\begin{equation}
\Delta_{\ell,k} = - \sum_{i=1}^\ell  (\del_{\rho_i}^2 + \frac{k}{\rho_i} \del_{\rho_i} + \frac{1}{\rho_i^{2k}} \del_{\theta_i}^2) + \Delta_y = \Delta_{\ell,k}^{\perp} + \Delta_y.
\label{celap}
\end{equation} 
The first term on the right is the normal component of this model Laplacian. 
Note that we are restricting to $k \geq 3$. (We could even choose different orders $k_j$ on the 
different $D_j$, but for the sake of simplicity do not do so.) As we shall see below, the case $k = 3$ is in 
some sense a critical value, and the analysis is slightly easier for larger values of $k$. One motivation for 
discussing this more general setting is to clarify the borderline nature of the Weil-Peterson metric.

The main point is to clarify exactly what sorts of perturbations are allowed.  We phrase this by focusing
on the end result, i.e.\ by delineating the properties of the operators for which our arguments work,
and then `backfilling' by defining the corresponding class of metrics appropriately.  Thus we first assume that
\begin{equation}
\Delta_g= 
- \sum_{i=1}^\ell( \pa_{\rho_i}^2+k\rho_i^{-1}\pa_{\rho_i}+\rho_i^{-2k}\pa_{\theta_i}^2) + \Delta_{D_J} + E,
\label{celap2}
\end{equation}
where $\Delta_{D_J}$ is the Laplacian for the induced metric on the codimension $2\ell$ stratum,
and where $E$ is an error term.  The key structural assumptions are now as follows.  First, if
$f$ is supported (or indeed just defined since differential operators
are local) in one of these local coordinate systems and depends only on the $\rho_i$, then
we shall assume that 
\begin{equation}
\Delta_g f= - \sum_{i=1}^\ell( \pa_{\rho_i}^2+(k+ a_i)\rho_i^{-1}\pa_{\rho_i}) f +\sum a_{ij}\pa^2_{\rho_i \rho_j }f,
\label{celap3}
\end{equation}
where
\[
|a_{ij}|,\ |a_i|\leq C|\rho|^{\eta}
\]
for some $\eta > 0$ with $|\rho|=(\rho_1^2 +\ldots+\rho_{\ell}^2)^{1/2}$ the Euclidean length;
moreover, if $f$ depends only on $\rho$ and $y$, but not $\theta$, then 
\begin{equation}\begin{aligned}
\Delta_g & f= - \sum_{i=1}^\ell( \pa_{\rho_i}^2+(k+a_i)\rho_i^{-1}\pa_{\rho_i}) f + (\Delta_{D_J} +\sum c_{ij}\pa^2_{y_i y_j} )f \\
& +\sum a_{ij}\pa^2_{\rho_i\rho_j}f + \sum b_{ij}\pa^2_{\rho_i y_j} +\sum \tilde b_{ij}\rho_i^{-1}\pa_{y_j},
\end{aligned}
\label{celap4}
\end{equation}
where 
\[
|a_{ij}|,\ |a_i|,\ |b_{ij}|,\ |\tilde b_{ij}|,\ |c_{ij}|\leq C|\rho|^{\eta},
\]
again for some $\eta  > 0$. 

If $g$ is a K\"ahler metric and the coordinates $(\rho,\theta,y)$ are adapted to the complex structure, then
using \eqref{lapkahler} it is easy to guarantee that \eqref{celap3} and \eqref{celap4} hold simply by requiring that 
\begin{equation}
|g_{i \bar{\jmath}} - \delta_{ij}|, |g_{i \bar{\mu}}|, |g_{\mu \bar{\nu}} - (g_{D_J})_{\mu \bar{\nu}}|  \leq|\rho|^\eta, 
\label{decaykahler}
\end{equation}
for $i, j = 1, \ldots, \ell,\  \mu, \nu = \ell+1, \ldots, n$. More generally, if $g$ is a real (non-Hermitian) metric, 
assume the convention that $i, j, \ldots$ are indices for the 
$\rho$ and $\theta$ variables, and $\mu, \nu, \ldots$ are indices for the $y$ variables. Now write
\begin{equation*}\begin{aligned}
g=&\sum g_{ij}^{\rho \rho} d\rho_i d\rho_j + \sum g_{ij}^{\rho \theta} d\rho_i \rho_j^k d\theta_j +\sum g_{i j }^{\theta \theta} 
\rho_i^k d\theta_i \rho_j^k d\theta_j\\
&+\sum g_{\mu \nu }^{yy} dy_\mu dy_\nu+\sum g_{i\mu }^{\rho y} d\rho_i dy_\mu +\sum g_{i \mu}^{\theta y}\rho_i^k d\theta_i dy_\mu. 
\end{aligned}\end{equation*}
(The superscripts $\rho$, $\theta$ and $y$ have been affixed to the metric components because of
obvious ambiguities in the indices.) We require first that 
\begin{equation}
\label{eq:metric-terms}
\begin{aligned}
|g_{ij}^{\rho \rho} -\delta_{ij}|, \ |g_{ij}^{\rho \theta}|,  \ |g_{ij}^{\theta \theta} -\delta_{ij}|, & \ |g_{\mu \nu}^{yy}-(g_{D_J})_{\mu \nu}|, \\
& |g_{i \mu}^{\rho y}|, \ |g_{j \mu}^{\theta y}| \leq C |\rho|^\eta
\end{aligned}
\end{equation}
for all choices of indices and for some $\eta > 0$.  These conditions are sufficient to guarantee that all the
coefficients of the second order terms in \eqref{celap3} and \eqref{celap4} have the right form.  To control
the coefficients of the first order terms, we must impose some conditions on the first 
derivatives of certain components of the metric. To specify these, recall the standard formula for the Laplacian in
an arbitrary coordinate system $(w_1, \ldots, w_{2n})$,
\[
-\Delta_g = \sum g^{\alpha \beta} \del^2_{w_\alpha w_\beta} + \sum \left( \frac12 g^{\alpha \beta} \del_{w_\alpha} \log \det (g_{\alpha \beta}) +
\del_{w_\alpha} g^{\alpha \beta}\right) \del_{w_\beta}.
\]
Write
\[
\log \det g = 2k \sum \log \rho_i + A.
\]
Then the coefficient of $\del_{\rho_i}$ is
\begin{multline*}
\sum_j \left((g^{\rho\rho})^{ij} (k \rho_j^{-1} + \del_{\rho_j} A + \del_{\rho_j} (g^{\rho \rho})^{ij} \right) + \\
\sum_j \left( \rho_j^{-2k}(g^{\rho \theta})^{ji} \del_{\theta_j} A + \rho_j^{-2k} \del_{\theta_j} (g^{\rho \theta})^{ji}\right) + \\
\sum_\mu \left( (g^{\rho y})^{\mu i} \del_{y_\mu}A + \del_{y_\mu} (g^{\rho y})^{\mu i}\right), 
\end{multline*}
and the coefficient of $\del_{y_\mu}$ is
\begin{multline*}
\sum_j \left((g^{\rho y})^{j \mu } (k \rho_j^{-1} + \del_{\rho_j} A + \del_{\rho_j} (g^{\rho y})^{j \mu } \right) + \\
\sum_j \left( \rho_j^{-2k}(g^{\theta y})^{j \mu} \del_{\theta_j} A + \rho_j^{-2k} \del_{\theta_j} (g^{\theta y})^{j\mu}\right) + \\
\sum_\nu \left( (g^{y y})^{\nu \mu} \del_{y_\nu}A + \del_{y_\nu} (g^{y y})^{\nu \mu}\right).
\end{multline*}
Comparing with \eqref{eq:metric-terms}, we see that the new conditions we must impose are that
\begin{equation}
\begin{split}
|\del_{\rho_j} A|, &|\del_{\theta_j} A|, |\del_{y_\mu} A|, |\del_{\rho_j} (g^{\rho \rho})^{ij}|, |\rho_j^{-2k} \del_{\theta_j} (g^{\rho \theta})^{ji}|, 
|\del_{y_\mu} (g^{\rho y})^{\mu i}|, \\ & |\del_{\rho_j} (g^{\rho y})^{j \mu }|, 
|\rho_j^{-2k} \del_{\theta_j} (g^{\theta y})^{j\mu}|, |\del_{y_\nu} (g^{y y})^{\nu \mu}| \leq |\rho|^\eta.
\end{split}
\label{newconditions}
\end{equation}

We have written this out in some detail to indicate that it is possible to phrase the necessary conditions
in terms of metric components. However, it is clearly far simpler to think of these conditions using
\eqref{celap3} and \eqref{celap4}.

\medskip

We conclude this section with the following observation. 
Since most of the basic arguments in the remainder of this paper are local, we take this opportunity to note
the existence of a partition of unity $\{\psi_\alpha\}_{\alpha \in A}$ on $\calM_\gamma$ with the property that
each $\psi_\gamma$ is supported either away from all of the divisors or else on one of the product polar
coordinate charts above, and which satisfy $|\nabla\psi_\gamma|, |\Delta\psi_\gamma| \leq C$. 
Indeed, we need only choose these functions so that $\del_{\theta_j} \psi_\alpha$, $\pa_{\rho_i}$, $\rho_i^{-k}\pa_{\theta_i}$ 
and $\pa_{y_j}$, for $y \in D_J$, applied to this function are $\calO(\rho_j^{2k})$; one can even arrange that $\psi_\alpha$ is
independent of $\theta_j$ when $\rho_j$ is sufficiently small.

\section{Essential self-adjointness of the Weil-Petersson Laplacian}
The operator $\Delta_g$ is symmetric on $\calC^\infty_{0} (\calM_\gamma)$, but since this space 
is incomplete, we must consider the possibility that there is not a unique self-adjoint extension.
Because $\Delta_g$ is semibounded, there is always at least one, namely the Friedrichs extension. 
Whether there are others besides this depends on the following considerations.

Recall the general definition of the minimal and maximal domains of the Laplacian:
\[ 
\mathcal D_{\max}(\Delta) = \{ u \in L^2(\calM_\gamma; dV_{\WP}): \Delta u \in L^2(\calM_\gamma; dV_{\WP}) \}
\]
and
\[
\begin{split}
\mathcal D_{\min}(\Delta) =  \{ u \in & L^2(\calM_\gamma; dV_{\WP}): \exists\, u_j \in \calC^\infty_0(\calM_\gamma)\ \mbox{such
that}\ \\
& u_j \to u\ \mbox{and}\ \Delta u_j \to f\ \mbox{in}\  L^2(\calM_\gamma; dV_{\WP})\}. 
\end{split}
\]
The operator $\Delta_{\WP}$ is called essentially self-adjoint provided $\mathcal D_{\min}(\Delta) = \mathcal D_{\max}(\Delta)$,
and in this case this is the unique self-adjoint extension of $\Delta$ from the core domain $\calC^\infty_{0}$.  If
these subspaces are not equal, then the self-adjoint extensions are in bijective correspondence with the
Lagrangian subspaces of $\mathcal D_{\max}/\mathcal D_{\min}$ with respect to a natural symplectic structure
(coming from the classical Green identities).  Choosing such an extension is tantamount to specifying a boundary
condition. 

We prove here the
\begin{thm}
The scalar Laplace operator $\Delta_{\WP}$ is essentially self-adjoint on $L^2(\calM_\gamma, dV_{\WP})$. 
\end{thm}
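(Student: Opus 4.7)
The plan is to establish the nontrivial inclusion $\calD_{\max}(\Delta_g) \subseteq \calD_{\min}(\Delta_g)$, i.e., to show that any $u \in \calD_{\max}$ can be approximated in graph norm by smooth, compactly supported functions. Using the partition of unity $\{\psi_\alpha\}$ constructed at the end of \S 2, whose members are supported either away from every divisor or entirely inside a single product polar chart $\calU \cong (0,\rho_0)^\ell \times (S^1)^\ell \times \calV$, and which have uniformly bounded $|\nabla \psi_\alpha|$ and $|\Delta_g \psi_\alpha|$, I reduce to proving the approximation property for $v = \psi_\alpha u \in \calD_{\max}$. Pieces supported away from the divisors are standard, so I may assume that $v$ lives in a single chart adapted to a crossing stratum $D_J$.

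In such a chart I would separate variables in the angular direction, writing $v = \sum_{m \in \ZZ^\ell} v_m(\rho, y) e^{i m \cdot \theta}$. Each nonzero Fourier mode experiences the confining potential $\sum_i m_i^2 \rho_i^{-2k}$ coming from $\Delta_g$ via \eqref{celap}, \eqref{celap4}; pairing $\Delta_g v$ against $v_m$ through Green's identity (after an interior mollification) forces
\[
\sum_i m_i^2 \int \rho_i^{-2k} |v_m|^2\, (\rho_1 \cdots \rho_\ell)^k\, d\rho\, dy < \infty,
\]
which yields so much vanishing of $v_m$ as any $\rho_i \to 0$ that straight polynomial cutoffs $\prod_i \chi(\rho_i/\epsilon)$ already produce smooth approximants, the commutator errors being absorbed by Cauchy-Schwarz using the $|\rho|^\eta$-smallness in \eqref{eq:metric-terms}, \eqref{newconditions}. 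The genuine content lies in the zero mode $v_0(\rho, y)$, on which $\Delta_g$ acts, by \eqref{celap4}, as the model operator $-\sum_i (\pa_{\rho_i}^2 + k \rho_i^{-1} \pa_{\rho_i}) + \Delta_{D_J}$ modulo $|\rho|^\eta$-small perturbations. Relative to the weight $\prod \rho_i^k$, this is precisely the Laplacian of a product of $\ell$ copies of $\RR^{k+1}$ (in radial variables) with $D_J$, and \cite{CdV} places us at the borderline dimension $n = k+1 = 4$ exactly when $k = 3$.

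To approximate $v_0$ I would employ tensor-product radial cutoffs $\chi_\epsilon(\rho) = \prod_i \omega_\epsilon(\rho_i)$, where $\omega_\epsilon$ vanishes for $\rho_i \leq \epsilon^2$ and equals $1$ for $\rho_i \geq \epsilon$, interpolating polynomially when $k \geq 4$ and on a logarithmic (Hardy) scale when $k = 3$, so that
\[
\int_0^{\rho_0} |\omega_\epsilon'(\rho)|^2\, \rho^k\, d\rho \longrightarrow 0 \quad \text{as}\ \epsilon \to 0.
\]
A preliminary weighted energy bound $\rho_i^{k/2} \pa_{\rho_i} v_0 \in L^2$, obtained by testing the equation against $v_0$ itself (with an interior regularization) and integrating by parts, combined with Cauchy-Schwarz, then yields $\|[\Delta_g, \chi_\epsilon] v_0\|_{L^2} \to 0$; a routine convolution in $\theta$ and $y$ completes the approximation by $\calC^\infty_0$ functions.

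The main technical obstacle is the borderline case $k = 3$ that governs $g_{\WP}$: no polynomial cutoff keeps the weighted integral of $|\omega_\epsilon'|^2$ small, so $\omega_\epsilon$ must be designed on a Hardy/logarithmic scale, and when $\ell \geq 2$ one must argue that the crossed commutators between the factors $\omega_\epsilon(\rho_i)$ also tend to zero in $L^2$. Establishing the preliminary weighted energy bound for $v_0$ from the bare hypotheses $v_0, \Delta_g v_0 \in L^2$ is itself delicate in this critical regime, and is precisely where the quantitative decay $|\rho|^\eta$ in \eqref{eq:metric-terms} and \eqref{newconditions} becomes essential, allowing the nonleading terms of $\Delta_g$ to be absorbed into the leading radial energy via Cauchy-Schwarz.
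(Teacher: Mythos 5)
Your overall strategy---reduce by the partition of unity to a single chart and then approximate elements of $\calD_{\max}$ in graph norm by cutoffs---is a legitimate route (and is the one advertised in the introduction), but the two steps that carry all the analytic weight are asserted rather than proved, and the mechanism you propose for them does not work. The central gap is the ``preliminary weighted energy bound.'' Testing $\Delta u=h$ against $f u$ with $f=\rho^{2\sigma}\phi_\delta^2$ produces, via the identity \eqref{eq:pair-symm-Delta-f}, the commutator term $\langle(\Delta f)u,u\rangle$, and since $\Delta(\rho^{2\sigma})\sim\rho^{2\sigma-2}$, the bare hypothesis $u\in L^2$ only controls this for $\sigma\geq 1$; the naive integration by parts therefore yields only $\rho\nabla u\in L^2$ (the Remark after Lemma~\ref{lem:interpolate}), not $\rho_i^{k/2}\pa_{\rho_i}u\in L^2$ in any useful normalization, and certainly not the unweighted bound $\nabla u\in L^2$. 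But your commutator $[\Delta,\chi_\ep]$ contains the term $2\langle\nabla\chi_\ep,\nabla u\rangle_g$ with $|\nabla\chi_\ep|\sim(\rho\log(1/\ep))^{-1}$ on the logarithmic scale, so sending $\|[\Delta,\chi_\ep]u\|_{L^2}\to 0$ requires something close to $\rho^{-1}\nabla u\in L^2$ --- a weight $\sigma=-1$ that is \emph{false} in general: the sharp range attainable here is $\sigma>-\frac{k-2}{4}$, i.e.\ $\sigma>-1/4$ for the Weil--Petersson case $k=3$. Closing this gap is exactly the content of the paper's \S 3.3: one must run the multiplier identity with carefully designed non-compactly-supported weights $f_\delta=\Phi_\delta^2\rho^{2\sigma}$, absorb $\langle(\Delta f_\delta)u,u\rangle$ using the \emph{sharp} Hardy constant $\frac{(2\sigma+k-1)^2}{4}$ from Lemma~\ref{lem-hardy}, and bootstrap the admissible weight downward in small increments (Lemmas~\ref{lemma:indicial-gap} and~\ref{lemma:indicial-bdy}); for $k=3$ the endpoint $\sigma=0$ is borderline and requires both the recentered identity \eqref{eq:recentered-est} and the hypothesis $\eta\geq 1$ on the metric perturbation, a point your proposal does not engage with at all. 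Once $\nabla u\in L^2$ and $\langle\rho\rangle^{-1}u\in L^2$ are known, the paper does not even need your cutoff construction: it closes the argument by extending Green's identity to $X^0$ by density (Lemma~\ref{Tuesday}) and invoking the deficiency-index criterion.

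Two further points. First, the Fourier decomposition in $\theta$ is not available for the actual operator: the perturbation terms permitted by \eqref{eq:metric-terms} and \eqref{newconditions} are not assumed $\theta$-independent, so $\Delta_g$ does not commute with the angular projections and the modes couple; this is precisely why the paper's structural hypotheses \eqref{celap3}--\eqref{celap4} are phrased only for multipliers $f$ depending on $\rho$ (and $y$), and why the whole argument is built on such multipliers rather than on separation of variables. Second, your derivation of $\sum_i m_i^2\int\rho_i^{-2k}|v_m|^2\rho^k\,d\rho<\infty$ ``by Green's identity'' is circular, since justifying that integration by parts for elements of $\calD_{\max}$ near the divisor is the very thing being proved. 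The heuristic comparison with $\RR^{k+1}$ and \cite{CdV} correctly identifies $k=3$ as the borderline case, but the proposal stops at the heuristic where the quantitative Hardy/bootstrap argument is required.
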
 
The rest of this section is devoted to  the proof. We also obtain the
\begin{cor}
This unique self-adjoint extension has discrete spectrum. 
\label{discspec}
\end{cor}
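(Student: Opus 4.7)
The plan is to deduce the corollary from essential self-adjointness by showing that the form domain $Q = \calD(\Delta^{1/2})$ embeds compactly into $L^2(\calM_\gamma; dV_{\WP})$; this implies that the resolvent $(\Delta+1)^{-1}$ is compact and hence that the spectrum is discrete. Because $\calD_{\max} = \calD_{\min}$, the space $Q$ is the closure of $\calC^\infty_0(\calM_\gamma)$ in the Dirichlet norm $\|u\|_Q^2 = \|u\|_{L^2}^2 + \|\nabla u\|_{L^2}^2$, so it suffices to produce, for any bounded sequence $\{u_n\} \subset \calC^\infty_0(\calM_\gamma)$, a subsequence converging in $L^2$. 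We use throughout that $\mathrm{Vol}_{\WP}(\calM_\gamma) < \infty$.

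I would localize via the partition of unity $\{\psi_\alpha\}$ from the end of \S 2: by a standard diagonal extraction it suffices to find convergent subsequences of $\{\psi_\alpha u_n\}$ separately for each index. When $\psi_\alpha$ is supported away from the divisors, classical Rellich--Kondrachov on a pre-compact region of the smooth part of $\calM_\gamma$ does the job. The only real content is the case of a $\psi_\alpha$ supported in a product polar chart $\calU = (0,\rho_0)^\ell \times (S^1)^\ell \times \calV$ adjacent to a crossing stratum $D_J$. Fix $\epsilon > 0$ and decompose $\calU$ along $\calU_\epsilon = \{\min_j \rho_j > \epsilon\}$. On $\calU_\epsilon$ the metric is uniformly equivalent to a smooth one on a pre-compact domain, so Rellich again yields a convergent subsequence in $L^2(\calU_\epsilon)$.

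The principal obstacle is a uniform cutoff estimate ruling out mass concentration on the divisor, namely
\begin{equation*}
\int_{\calU\setminus\calU_\epsilon} |v|^2\, dV_{\WP} \leq C \epsilon^2 \|\nabla v\|_{L^2}^2, \qquad v \in \calC^\infty_0(\calM_\gamma),
\end{equation*}
with $C$ independent of $\epsilon$ and $v$. To prove it I would use a one-dimensional weighted Hardy inequality in each radial direction. Since $v$ vanishes in a neighbourhood of every divisor, a direct integration by parts (valid because $k \geq 3 > 1$) gives
\begin{equation*}
\int_0^{\rho_0} |v|^2 \rho_j^{k-2}\, d\rho_j \leq \frac{4}{(k-1)^2} \int_0^{\rho_0} |\partial_{\rho_j} v|^2 \rho_j^k \, d\rho_j ;
\end{equation*}
integrating against the transverse weight $\calJ \prod_{i \ne j}\rho_i^k$ over the remaining coordinates and observing that $\rho_j^k \leq \epsilon^2 \rho_j^{k-2}$ on $\{\rho_j < \epsilon\}$ yields $\int_{\{\rho_j < \epsilon\}} |v|^2 \, dV_{\WP} \leq C \epsilon^2 \|\nabla v\|_{L^2}^2$. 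Summing over $j = 1,\dots,\ell$ and absorbing the $\calO(|\rho|^\eta)$ perturbative corrections to $g_{\WP}$ into the constant gives the claimed bound. A diagonal extraction as $\epsilon \to 0$ then produces the desired $L^2$-convergent subsequence, establishing compactness of $Q \hookrightarrow L^2$ and hence of the resolvent.
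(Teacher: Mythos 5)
Your argument is correct, and it takes a mildly but genuinely different route from the paper's. The paper disposes of the corollary in one line at the end of \S 3: the proof of Lemma~\ref{Tuesday} shows that $\calD_{\max}(\Delta)=\calD_{\min}(\Delta)$ is contained in $\langle\rho\rangle L^2\cap H^1$, and this space is compactly contained in $L^2$ (the extra decay $\langle\rho\rangle^{-1}u\in L^2$ supplies the uniform tail bound near the divisors, and Rellich handles the interior). You instead work with the form domain: since the unique self-adjoint extension is the Friedrichs extension, its form domain is the $H^1$-closure of $\calC^\infty_0(\calM_\gamma)$, and you prove the tail estimate $\int_{\{\rho_j<\epsilon\}}|v|^2\,dV\leq C\epsilon^2\|\nabla v\|^2$ directly from the one-dimensional Hardy inequality of Lemma~\ref{lem-hardy} (with $\alpha=k$, $\beta=0$, so $2\beta+\alpha-1=k-1\geq 2$), using $\rho_j^k\leq\epsilon^2\rho_j^{k-2}$ on the tail. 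Both proofs are ultimately the same ``uniform smallness near the singular set plus interior Rellich'' scheme, but yours buys something: compactness of the form-domain embedding needs only the elementary Hardy inequality and finiteness of $\mathrm{Vol}_{\WP}$, not the delicate iterated decay-improvement lemmas of \S 3.3, whereas the paper's one-line deduction leans on the full strength of the domain characterization it has already established. The only point worth making explicit in your write-up is the localization bookkeeping: after multiplying by $\psi_\alpha$ you control $\|\nabla(\psi_\alpha u_n)\|$ by $\|\nabla u_n\|+\|u_n\|$ using the boundedness of $|\nabla\psi_\alpha|$, and the Hardy step is applied to functions supported in the chart and vanishing near the divisors, so no boundary terms arise; the estimate then passes to the closure by density.
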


The key to proving these statements is to show that if $u \in \mathcal D_{\max}(\Delta)$, then $u$ must
decay sufficiently and have enough regularity to lie in $\mathcal D_{\min}(\Delta)$.  We accomplish this
here through a sequence of one- and multi-dimensional Hardy and interpolation estimates. 

\subsection{Hardy inequalities}
\begin{lem}\label{lem-hardy} 
Fix any measure space $(Y, d\nu)$ and consider the measure space $(X, d\mu)$ where $X = \RR^+ \times Y$ 
and $d\mu=\rho^\alpha d\rho\, d\nu$. Suppose that $u\in L^2_{\loc}(X)$ has support in $\{\rho < \rho_0\}$ and
satisfies $\rho^\beta\pa_\rho u\in L^2(X; d\mu)$. If $2\beta+\alpha>1$, then $\rho^{\beta-1}u \in L^2(X, d\mu)$ and
\begin{equation}
\|\rho^{\beta-1}u\|_{L^2(X; d\mu)}\leq \frac{2}{2\beta+\alpha-1}\|\rho^\beta\pa_\rho u\|_{L^2(X, d\mu)}.
\label{Hardy1} 
\end{equation}
If we drop the condition that $u$ is supported in a finite strip in $\RR^+$, then for any $\rho_0, \ep > 0$, 
there exists some $C>0$ such that
\begin{equation*}
\begin{split}
&\|\rho^{\beta-1}u\|_{L^2((0,\rho_0)\times Y ; d\mu)}\\ 
&\qquad\leq \frac{2}{2\beta+\alpha-1}\|\rho^\beta\pa_\rho u\|_{L^2((0,\rho_0+\ep)\times Y; d\mu)}
+C\|u\|_{L^2((\rho_0,\rho_0+\ep)\times Y; d\mu)}.
\end{split}
\end{equation*}
\end{lem}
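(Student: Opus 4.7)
The plan is to reduce to a one-dimensional weighted Hardy inequality via Fubini, prove that inequality by integration by parts against the antiderivative of the weight $\rho^{2\beta+\alpha-2}$, and then handle the two subtleties (no boundary trace at $\rho=0$, and the general-support version) by standard cutoff arguments.

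First I would set $\gamma = 2\beta+\alpha-1>0$ and, for a.e.\ fixed $y\in Y$, work with the slice $u(\cdot,y)$, which by hypothesis lies in $L^2_{\loc}((0,\infty),\rho^\alpha d\rho)$ with $\rho^\beta\pa_\rho u(\cdot,y)\in L^2(\rho^\alpha d\rho)$. For smooth $v$ supported in a compact subset of $(0,\rho_0)$ I would write
\[
\int_0^\infty |v|^2\rho^{\gamma-1}\,d\rho = \frac{1}{\gamma}\int_0^\infty |v|^2\,d(\rho^\gamma) = -\frac{2}{\gamma}\,\mathrm{Re}\!\int_0^\infty \rho^\gamma\,\bar v\,\pa_\rho v\,d\rho,
\]
and then apply Cauchy--Schwarz, splitting $\rho^\gamma = \rho^{(\gamma-1)/2}\cdot\rho^{(\gamma+1)/2}$, to obtain
\[
\int_0^\infty |v|^2\rho^{\gamma-1}\,d\rho \le \frac{2}{\gamma}\Bigl(\int_0^\infty|v|^2\rho^{\gamma-1}d\rho\Bigr)^{1/2}\Bigl(\int_0^\infty|\pa_\rho v|^2\rho^{\gamma+1}d\rho\Bigr)^{1/2}.
\]
Dividing gives the one-dimensional Hardy inequality with constant $2/\gamma$, and since $\rho^{\gamma-1}d\rho = \rho^{2\beta-2}\cdot\rho^\alpha d\rho$ and $\rho^{\gamma+1}d\rho = \rho^{2\beta}\cdot\rho^\alpha d\rho$, squaring and integrating against $d\nu$ yields exactly \eqref{Hardy1}.

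The main obstacle is passing from smooth compactly supported $v$ to general $u\in L^2_{\loc}$ with only $\rho^\beta\pa_\rho u\in L^2$; here I must verify that the boundary contribution at $\rho=0$ vanishes and that no trace at $\rho=0$ is needed. The standard device is to introduce a cutoff $\chi_\delta(\rho)$ equal to $0$ on $(0,\delta)$, to $1$ on $(2\delta,\infty)$, with $|\chi_\delta'|\le C/\delta$, and apply the computation above to $\chi_\delta u$. The cross-term contributes $\int |\chi_\delta'|^2 |u|^2 \rho^{\gamma+1}d\rho\,d\nu$, which is bounded by $C\delta^{\gamma-1}\int_\delta^{2\delta}|u|^2\rho^\alpha d\rho\,d\nu$; because $\gamma>0$ and $u\in L^2(\rho^\alpha d\rho\,d\nu)$ near $0$, this tends to $0$ along a suitable sequence $\delta\to 0$ (and is actually $o(1)$ by dominated convergence once one observes that $\rho^{\beta-1}u\in L^2$ is an a priori output of the estimate applied to $\chi_\delta u$, giving a monotone approximation). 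Passing to the limit recovers \eqref{Hardy1} for $u$ with support in $\{\rho<\rho_0\}$, and the sharp constant is preserved since all approximation errors are lower order.

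Finally, for $u$ whose support is not confined to $\{\rho<\rho_0\}$, I would pick a smooth cutoff $\chi(\rho)$ with $\chi\equiv 1$ on $[0,\rho_0]$, $\supp\chi\subset [0,\rho_0+\ep)$, and $|\chi'|\le C/\ep$, and apply the already-proved inequality to $\chi u$, which does have support in $\{\rho<\rho_0+\ep\}$. Using
\[
\|\rho^{\beta-1}u\|_{L^2((0,\rho_0)\times Y;d\mu)} \le \|\rho^{\beta-1}(\chi u)\|_{L^2(X;d\mu)}
\]
and the product rule $\pa_\rho(\chi u)=\chi\pa_\rho u + \chi' u$ together with $\supp\chi'\subset(\rho_0,\rho_0+\ep)$, the triangle inequality gives
\[
\|\rho^\beta\pa_\rho(\chi u)\|_{L^2(X;d\mu)} \le \|\rho^\beta\pa_\rho u\|_{L^2((0,\rho_0+\ep)\times Y;d\mu)} + C\|u\|_{L^2((\rho_0,\rho_0+\ep)\times Y;d\mu)},
\]
which is precisely the second inequality. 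The only possibly delicate point here is the justification of the $\delta\to 0$ limit in the preceding paragraph; once that is settled the rest is essentially algebraic bookkeeping.
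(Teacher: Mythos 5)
Your overall strategy (one-dimensional Hardy via integration by parts against $\rho^{\gamma}$, Cauchy--Schwarz with the split $\rho^{\gamma}=\rho^{(\gamma-1)/2}\rho^{(\gamma+1)/2}$, then cutoffs) is the same as the paper's, and your treatment of the second statement via a cutoff supported in $[0,\rho_0+\ep)$ is exactly right. The genuine gap is in the step you yourself flag as delicate: the removal of the cutoff at $\rho=0$. Your bound on the cross term, $\|\rho^{\beta}\chi_\delta' u\|^2\lesssim \delta^{2\beta-2}\int_{\delta}^{2\delta}|u|^2\rho^\alpha\,d\rho\,d\nu$, is used together with the assertion that ``$u\in L^2(\rho^\alpha d\rho\,d\nu)$ near $0$''; but that is not a hypothesis of the lemma (only $u\in L^2_{\loc}(X)$ with $X=\RR^+\times Y$, i.e.\ square-integrability away from $\rho=0$, is assumed), and invoking $\rho^{\beta-1}u\in L^2$ to control the error is circular, since that is the conclusion. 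Moreover, even granting $u\in L^2(d\mu)$ near $0$, the prefactor $\delta^{2\beta-2}$ diverges when $\beta<1$, and one cannot in general extract a subsequence along which $\delta^{2\beta-2}\int_\delta^{2\delta}|u|^2\,d\mu\to 0$: integrability of $h=|u|^2\rho^\alpha$ only forces $\sum_j\int_{2^{-j-1}}^{2^{-j}}h<\infty$, which is compatible with $\int_\delta^{2\delta}h\gtrsim\delta^{2-2\beta}$ for all small $\delta$ when $2-2\beta>0$. This is not an exotic regime; the paper later applies the lemma with $\beta=0$ (in the Weyl-estimate section) and with negative weights elsewhere, so the case $\beta<1$ must be covered.

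The fix, which is the one point of substance in the paper's proof, is to choose the cutoff $\phi(\rho/\delta)$ \emph{monotone nondecreasing}. Then in the identity
\[
\pa_\rho\bigl(\phi(\rho/\delta)^2\rho^{\gamma}\bigr)=\gamma\,\phi(\rho/\delta)^2\rho^{\gamma-1}+2\delta^{-1}\phi(\rho/\delta)\phi'(\rho/\delta)\rho^{\gamma},
\]
the term containing $\phi'$ is nonnegative, so one has the one-sided inequality
\[
\gamma\int\phi(\rho/\delta)^2\rho^{\gamma-1}|u|^2\,d\rho\,d\nu\le\int\pa_\rho\bigl(\phi(\rho/\delta)^2\rho^{\gamma}\bigr)|u|^2\,d\rho\,d\nu
=-2\,\mathrm{Re}\int\phi(\rho/\delta)^2\rho^{\gamma}\,\overline{u}\,\pa_\rho u\,d\rho\,d\nu,
\]
and the derivative of the cutoff never has to be estimated at all: after Cauchy--Schwarz one divides by $\|\phi(\rho/\delta)\rho^{\beta-1}u\|$ (finite since $\phi(\rho/\delta)u$ vanishes near $\rho=0$ and $u\in L^2_{\loc}$) and lets $\delta\to 0$ by monotone convergence. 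Replacing your symmetric ``error-term'' estimate by this sign argument closes the gap; the rest of your write-up then goes through.
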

\begin{proof}
Assume first that $u$ is supported in $\rho<\rho_0$. Choose a function $\phi\in\CI(\RR)$ which is nonnegative and
monotone nondecreasing, vanishes for $\rho \leq 1/2$ and with $\phi(\rho)=1$ for $\rho \geq 3/4$. We use $\phi(\rho/\delta)$
as a cutoff, with $\delta \searrow 0$. 

By hypothesis, $u\in H^1_{\loc}$, and $\phi(\rho/\delta) u = 0$ near $\rho=0$, we calculate
\begin{equation*}
\begin{split}
&(2\beta+\alpha-1)\|\phi(\rho/\delta)\rho^{\beta-1}u\|^2_{L^2(X; d\mu)} \\
&=(2\beta+\alpha-1)\int \phi(\rho/\delta)^2\rho^{2\beta+\alpha-2} u \overline{u}\,d\rho d\nu\\
&\leq \int \left((2\beta+\alpha-1)\phi(\rho/\delta)^2\rho^{2\beta+\alpha-2}+2\delta^{-1}\phi(\rho/\delta)\phi'(\rho/\delta)\rho^{2\beta+\alpha-1}\right) u\, \overline{u}\,d\rho dz\\
&=\int\pa_\rho(\phi(\rho/\delta)^2\rho^{2\beta+\alpha-1}) u\, \overline{u}\,d\rho dz\\
&=\int (\pa_\rho (\phi(\rho/\delta)^2\rho^{2\beta+\alpha-1}u)-\phi(\rho/\delta)^2\rho^{2\beta+\alpha-1}\pa_\rho u)\,\overline{u}\,d\rho dz\\
&=-\int \left(\phi(\rho/\delta)^2\rho^{2\beta+\alpha-1}u \,\overline{\pa_\rho u}+\pa_\rho u\,\phi(\rho/\delta)^2\rho^{2\beta+\alpha-1}\overline{u}\right)\,d\rho dz\\
&=-\langle \phi(\rho/\delta)\rho^{\beta-1}u,\phi(\rho/\delta)\rho^\beta\pa_\rho u\rangle_{L^2} - 
\langle \phi(\rho/\delta)\rho^\beta\pa_\rho u,\phi(\rho/\delta)\rho^{\beta-1}u\rangle_{L^2}.
\end{split}
\end{equation*}
Using the Cauchy-Schwartz inequality, this yields
\begin{equation}\label{eq:Hardy-CS}
(2\beta+\alpha-1)\|\phi(\rho/\delta)\rho^{\beta-1}u\|^2\leq 2 \|\phi(\rho/\delta)\rho^{\beta-1}u\| 
\|\phi(\rho/\delta)\rho^\beta\pa_\rho u\|, 
\end{equation}
so dividing through by $\|\phi(\rho/\delta)\rho^{\beta-1}u\|$ gives
\begin{equation}\label{eq:reg-Hardy}
(2\beta+\alpha-1)\|\phi(\rho/\delta)\rho^{\beta-1}u\|\leq 2 \|\phi(\rho/\delta)\rho^\beta\pa_\rho u\|_{L^2}
\leq 2 \|\rho^\beta\pa_\rho u\|_{L^2}.
\end{equation}
This inequality is obvious if $\|\phi(\rho/\delta)\rho^{\beta-1}u\| = 0$.  Finally, let $\delta\to 0$ to get
that $\rho^{\beta-1}u\in L^2$ and that the estimate of the lemma holds.

If $u$ is not compactly supported then replace $\phi(\rho/\delta)$ by $\phi(\rho/\delta)\psi(\rho)$ where $\psi\in\CI(\RR)$
is nonnegative, equals $1$ for $\rho \leq \rho_0$ and is supported in $\rho \leq \rho_0 + \ep$. 
Apply the Hardy inequality above to $\psi u$ to get
\begin{equation}\label{eq:Hardy-mod}
\frac{2\beta+\alpha-1}{2}\|\rho^{\beta-1}\psi(\rho)u\|_{L^2}
\leq \|\rho^\beta\psi \pa_\rho u\|_{L^2}+\|\rho^\beta\psi'(\rho) u\|_{L^2};
\end{equation}
in view of the support properties of $\psi$ and $\psi'$, this proves the lemma.
\end{proof}

Now suppose we are near an intersection of divisors $D_J$.  Choose coordinates as before, and
for simplicity, for any multi-indices $\sigma \in \RR^\ell$ and $\gamma \in \mathbb N^\ell$,  write 
$$
\rho^\sigma = \rho_1^{\sigma_1} \ldots \rho_\ell^{\sigma_\ell}, \quad \mbox{and}\quad 
(\rho\pa_\rho)^\gamma= (\rho_1\del_{\rho_1} )^{\gamma_1} \ldots (\rho_\ell \del_{\rho_\ell})^{\gamma_\ell}.
$$
If $s \in \RR$, then we also write $\rho^s = (\rho_1 \ldots \rho_\ell)^s$. We also define
$$
\langle \rho \rangle =\left(\sum\rho_i^{-1}\right)^{-1}
$$
so $\langle \rho \rangle = 0$ when any of the $\rho_i$ vanishes. Note that $\langle \rho \rangle \leq\rho_j$ for any $j$.
Then the one-dimensional Hardy inequality above immediately gives

\begin{lem} Let $X = (\RR^+)^\ell \times Y$ where $(Y,d\nu)$ is a measure space and set
$d\mu=\rho^\alpha d\rho\, d\nu $. Fix $\alpha,\beta\in\RR^\ell$ such that $2\beta_i+\alpha_i>1$
for each $i$. If $u\in L^2_{\loc}(X, d\mu)$ is supported in $\{\rho_i<\rho_0\ \forall i\}$ and
$\rho_i^\beta\pa_{\rho_i} u\in L^2(X; d\mu)$ for each $i$, then 
$$
\|\rho^{\beta}\rho_i^{-1}u\|_{L^2(X; d\mu)}\leq \frac{2}{2\beta_i+\alpha_i-1}\|\rho^\beta\pa_{\rho_i} u\|_{L^2(X; d\mu)}.
$$

If we do not assume that $u$ has compact support, then for any $\rho_0,\ep>0$, there 
exists $C>0$ such that 
\begin{equation*}
\begin{split}
&\|\rho^{\beta}\rho_i^{-1}u\|_{L^2(X \cap \{\rho_i < \rho_0\}; d\mu)}\\
&\qquad\leq \frac{2}{2\beta_i+\alpha_i-1}\|\rho^\beta\pa_{\rho_i}
u\|_{L^2(X \cap \{\rho_i \rho_0+\ep\}; d\mu)} +C\|\rho^\beta u\|_{L^2(X \cap \{\rho_0 \leq \rho_i \leq \rho_0+\ep\}; d\mu)}.
\end{split}
\end{equation*}
\end{lem}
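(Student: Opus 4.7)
The plan is to reduce the $\ell$-variable statement to the one-variable Hardy inequality (Lemma~\ref{lem-hardy}) by slicing in a single chosen coordinate $\rho_i$ and integrating over the rest using Fubini. Fix $i$ and split the space as $X = \RR^+_{\rho_i} \times \tilde Y_i$ where $\tilde Y_i = (\RR^+)^{\ell-1} \times Y$ carries the remaining coordinates $(\rho_j)_{j\neq i}$ together with the original $Y$-variables. The product measure factors cleanly as $d\mu = \rho_i^{\alpha_i}\,d\rho_i \otimes d\tilde\nu_i$ with $d\tilde\nu_i = \prod_{j\neq i}\rho_j^{\alpha_j}\,d\rho_j\,d\nu$, so $(X, d\mu)$ has exactly the product form required as input to Lemma~\ref{lem-hardy}.

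For $d\tilde\nu_i$-almost every $\tilde y \in \tilde Y_i$, the slice $\rho_i \mapsto u(\rho_i, \tilde y)$ lies in $L^2_{\loc}(\RR^+, \rho_i^{\alpha_i}d\rho_i)$, is supported in $\{\rho_i < \rho_0\}$, and satisfies $\rho_i^{\beta_i}\pa_{\rho_i}u(\cdot,\tilde y) \in L^2(\RR^+, \rho_i^{\alpha_i}d\rho_i)$; the scalar weight $\rho^{\beta'}(\tilde y) := \prod_{j\neq i}\rho_j^{\beta_j}$ is finite and nonzero for a.e.\ $\tilde y$, so it may be absorbed or extracted at will. Applying the one-dimensional Hardy inequality with its scalar parameters set equal to $\beta_i$ and $\alpha_i$---the sign condition $2\beta_i + \alpha_i > 1$ being exactly what is assumed---yields the fiber estimate
\[
\|\rho_i^{\beta_i - 1} u(\cdot, \tilde y)\|_{L^2(\rho_i^{\alpha_i}d\rho_i)} \leq \frac{2}{2\beta_i + \alpha_i - 1}\|\rho_i^{\beta_i}\pa_{\rho_i}u(\cdot,\tilde y)\|_{L^2(\rho_i^{\alpha_i}d\rho_i)}.
\]
Multiplying both sides by the constant $\rho^{\beta'}(\tilde y)$, squaring, integrating against $d\tilde\nu_i$, and recognising the identity $\rho^{\beta'}\cdot \rho_i^{\beta_i} = \rho^\beta$ reassembles the two sides as $\|\rho^\beta \rho_i^{-1}u\|_{L^2(X;d\mu)}$ and $\|\rho^\beta \pa_{\rho_i}u\|_{L^2(X;d\mu)}$ respectively, which is the announced inequality. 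The non-compactly-supported case follows in the same way from the cutoff version of Lemma~\ref{lem-hardy}, the extra error term being contributed by a one-dimensional cutoff $\psi'(\rho_i)$ concentrated in the collar $\{\rho_0 \leq \rho_i \leq \rho_0 + \ep\}$.

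I anticipate no substantial obstacle: the whole argument is a bookkeeping reduction to the one-dimensional Hardy estimate already proved. The only technical point is the standard Fubini verification that the slice-wise hypotheses hold for $d\tilde\nu_i$-a.e.\ $\tilde y$, which is immediate from the global hypotheses on $u$ together with the observation that the weight factor $\rho^{\beta'}(\tilde y)$ is positive and finite outside a $d\tilde\nu_i$-null set.
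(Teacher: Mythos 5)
Your proposal is correct and matches the paper's intent exactly: the paper offers no separate proof, stating only that "the one-dimensional Hardy inequality above immediately gives" the result, precisely because Lemma~\ref{lem-hardy} was formulated for an arbitrary measure space $(Y,d\nu)$ so that one may take $Y$ to be $(\RR^+)^{\ell-1}\times Y$ with the weighted measure in the remaining variables. Your slicing/Fubini bookkeeping, including absorbing the $\rho_i$-independent factor $\prod_{j\neq i}\rho_j^{\beta_j}$ and invoking the cutoff version for the non-compactly-supported case, is exactly the intended (and valid) reduction.
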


\subsection{Interpolation inequalities}
For any $\sigma \in\RR^\ell$, define the space
\begin{equation}
X^{-\sigma}=\{u \in L^2_{\mathrm{loc}}: 
\|u\|^2_{X^{-\sigma}} :=\|\rho^\sigma\langle \rho \rangle ^{-1}u\|^2+\|\rho^\sigma \langle \rho \rangle \Delta u\|^2 < \infty\}.
\label{Xsig}
\end{equation}
We write $X^{-\sigma}(\calU)$ for the space of functions with finite $X^{-\sigma}$ norm in $\calU$. 

Our first task is to show that the $X^{-\sigma}$ norm 
controls the $L^2$ norm of $\rho^\sigma \nabla u$.   Note that $\rho^\sigma \langle \rho \rangle^{-1} u \in L^2$ 
is equivalent to $\rho^\sigma\rho_j^{-1}u\in L^2$ for all $j$.   This result is local, so we fix $\rho_0 > 0$ 
and work in a neighbourhood $\calU_{\rho_0}=\{\rho_j<\rho_0,\ j=1,\ldots, k\}$.

\begin{lem}\label{lem:interpolate}
Let $\sigma \in\RR^\ell$, $\ep>0$, and suppose that $u \in X^{-\sigma}(\calU_{\rho_0 + \ep})$. Then
$\rho^\sigma \nabla u$ is in $L^2(\cU_{\rho_0})$ and
\[
\|\rho^\sigma \nabla u\|^2_{L^2(\cU_{\rho_0})}\leq C\|\rho^\sigma \langle \rho\rangle^{-1}u\|_{  L^2(\cU_{\rho_0+\ep})}
\, ||u||_{X^{-\sigma}(\calU_{\rho_0+\ep})} 
\]
In particular 
\[
\rho \langle \rho \rangle ^{-1}u, \ \rho \langle \rho \rangle \Delta u\in L^2 \Longrightarrow \rho\nabla u\in L^2.
\]
\end{lem}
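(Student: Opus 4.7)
The approach is a weighted Green identity. I will fix a product cutoff $\chi(\rho)=\prod_{j=1}^\ell \phi_j(\rho_j)$, where each $\phi_j\in\CI(\RR)$ equals $1$ on $[0,\rho_0]$ and vanishes on $[\rho_0+\ep,\infty)$. Then $\chi\equiv 1$ on $\cU_{\rho_0}$, $\supp\chi\subset\cU_{\rho_0+\ep}$, and crucially each $\pa_{\rho_i}^\alpha\chi$ is supported where $\rho_i\geq\rho_0$, so every $\rho_i^{-m}|\pa_{\rho_i}^\alpha\chi|$ is uniformly bounded. Setting $\phi:=\chi^2\rho^{2\sigma}$, my target is the weighted Green identity
\[
\int \phi\,|\nabla u|_g^2\,dV_g \;=\; \mathrm{Re}\int \phi\,u\,\overline{\Delta_g u}\,dV_g \;-\; \tfrac12\int |u|^2\,\Delta_g\phi\,dV_g,
\]
which comes from applying Green's identity twice: first move $\nabla$ from $\nabla(\phi\bar u)$ onto $u$ to expose $\int\phi|\nabla u|^2$, then use $\mathrm{Re}(\bar u\langle\nabla\phi,\nabla u\rangle)=\tfrac12\langle\nabla\phi,\nabla|u|^2\rangle$ and integrate by parts once more.

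Next I will estimate the two right-hand terms. The first, by Cauchy--Schwarz with the split $\rho^{2\sigma}=(\rho^\sigma\langle\rho\rangle^{-1})(\rho^\sigma\langle\rho\rangle)$, is bounded by $\|\rho^\sigma\langle\rho\rangle^{-1}u\|_{L^2(\cU_{\rho_0+\ep})}\,\|u\|_{X^{-\sigma}(\cU_{\rho_0+\ep})}$. For the second, a direct calculation using \eqref{celap3} applied to $\rho^{2\sigma}$ (which depends only on $\rho$) yields $|\Delta_g\rho^{2\sigma}|\leq C\rho^{2\sigma}\sum_i\rho_i^{-2}\leq C\rho^{2\sigma}\langle\rho\rangle^{-2}$; the Leibniz cross-terms $(\Delta_g\chi^2)\rho^{2\sigma}$ and $\langle\nabla\chi^2,\nabla\rho^{2\sigma}\rangle_g$ are bounded (by the product structure of $\chi$) and supported in the shell where some $\rho_j\in[\rho_0,\rho_0+\ep]$; since $\langle\rho\rangle\leq\rho_0+\ep$ on $\cU_{\rho_0+\ep}$, they are likewise dominated by $C\rho^{2\sigma}\langle\rho\rangle^{-2}$. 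Thus $\tfrac12|\int|u|^2\Delta_g\phi\,dV_g|\leq C\|\rho^\sigma\langle\rho\rangle^{-1}u\|_{L^2}^2\leq C\|\rho^\sigma\langle\rho\rangle^{-1}u\|_{L^2}\|u\|_{X^{-\sigma}}$, since $\|\rho^\sigma\langle\rho\rangle^{-1}u\|_{L^2}\leq\|u\|_{X^{-\sigma}}$. Combining these, and noting that $|\nabla u|_g^2\gtrsim \sum_i|\pa_{\rho_i}u|^2+\sum_i\rho_i^{-2k}|\pa_{\theta_i}u|^2+|\nabla_yu|^2$ by \eqref{eq:metric-terms} together with $\chi\equiv 1$ on $\cU_{\rho_0}$, yields the stated inequality.

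The main obstacle is justifying the Green identity under the weak hypothesis $u\in X^{-\sigma}$, which provides only $u,\Delta u\in L^2_{\loc}$ and no a priori regularity at the divisors. To handle this I would insert an auxiliary cutoff $\psi_\delta(\rho)=\prod_j\phi(\rho_j/\delta)$ (with $\phi$ as in Lemma \ref{lem-hardy}) vanishing near $\{\rho_j=0\}$, apply the identity with $\phi$ replaced by $\phi\psi_\delta^2$, and let $\delta\to 0$. The identity is legitimate at each $\delta>0$ because standard interior elliptic regularity gives $u\in H^2_{\loc}(\mathring\calM)$ and $\phi\psi_\delta^2$ is smooth and compactly supported there. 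Monotone convergence on the left and dominated convergence on the first right-hand term are routine; the delicate step is that each extra piece in $\Delta_g(\phi\psi_\delta^2)$ involving $\nabla\psi_\delta$ or $\Delta\psi_\delta$ is supported in $\{\rho_j\sim\delta\}$ for some $j$, where these quantities are $O(\delta^{-2})\rho^{2\sigma}$; using $\langle\rho\rangle\leq\rho_j\sim\delta$ on this set, each such contribution is controlled by $\int_{\{\rho_j\leq\delta\}}|u|^2\rho^{2\sigma}\langle\rho\rangle^{-2}\,dV\to 0$, in direct analogy with the cutoff limit already used in the proof of Lemma \ref{lem-hardy}.
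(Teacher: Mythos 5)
Your proof is correct and follows essentially the same route as the paper: the identity you call the weighted Green identity is exactly the paper's \eqref{eq:pair-symm-Delta-f} (obtained there from the operator identity \eqref{mfop}), with the same choice of weight $f=(\text{cutoff})^2\rho^{2\sigma}$, the same regularizing factors $\phi(\rho_j/\delta)$ near the divisors, the same bound $|\Delta_g f|\leq C\rho^{2\sigma}\langle\rho\rangle^{-2}$ uniform in $\delta$, and the same Cauchy--Schwarz splitting $\rho^{2\sigma}=(\rho^\sigma\langle\rho\rangle^{-1})(\rho^\sigma\langle\rho\rangle)$. The only cosmetic differences are that you show the $\delta$-error terms actually vanish (the paper only needs their uniform boundedness) and you omit the $y$-cutoff $\chi(y)^2$, neither of which affects the argument.
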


\begin{rem}
As a consequence, if $u\in X^{-\sigma}$, then $\rho^\sigma\nabla u\in L^2$. Furthermore, if $\sigma_i\geq 1$ for all $i$ then
$u\in L^2$ implies $\rho^\sigma \langle \rho \rangle^{-1}u\in L^2$.  In particular, if $u, \Delta u \in L^2$,
then $u\in X^{-\sigma}$ for all $\sigma$ with $\sigma_i\geq 1$ for all $i$.
\end{rem}

\begin{proof}
Let $f$ be real-valued and $\calC^\infty$ and denote by $m_f$ the operator of multiplication by $f$.  We claim that there 
is an equality of differential operators
\begin{equation}
\Delta \circ m_f+ m_f \circ \Delta=2\nabla^* ( m_f \circ \nabla \,) + m_{\Delta f} .
\label{mfop}
\end{equation}
Indeed, both sides are symmetric and have the same principal symbol, so their difference is a symmetric, real, first order 
scalar operator, hence must actually have order zero. Applying both sides to the constant function $1$ gives the claim. 

Suppose that $f\geq 0$ is real valued and has compact support in $\calM_\gamma$. Applying the operator in \eqref{mfop} 
to $u$ and taking the inner product with $u$ gives
\begin{equation}\label{eq:pair-symm-Delta-f}
2 \mbox{Re}\, \langle \Delta u, fu\rangle =2\|f^{1/2}\nabla u\|^2+\langle (\Delta f)u,u\rangle.
\end{equation}
This equality extends to all $u\in H^2_{\loc}(\calM_\gamma)$. 

Choose $\phi, \psi \in \calC^\infty(\RR)$ with $0 \leq \phi, \psi \leq 1$ and such that 
\[
\phi(t) = 0\ \mbox{for}\ t \leq 1/2, \phi(t) = 1\ \mbox{for}\  t \geq 1,
\]
\[
\psi = 1\ \mbox{for}\ t \leq \rho_0,\ \psi(t) = 0\ \mbox{for}\ t \geq \rho_0 + \frac{\ep}{2}. 
\]
Furthermore, let $\chi\in\CI_c(\RR^s)$ be supported in the $y$-coordinate chart $\calV$. Then define
\[
f_{i,\delta} = \phi(\rho_i/\delta)^2\psi(\rho_i)^2\rho_i^{2\sigma_i}, \qquad 
f_\delta= f_{1,\delta} \ldots f_{\ell, \delta}\chi(y)^2. 
\]
In order to use this in the identity above, we must compute 
\begin{equation*}\begin{aligned}
\Delta
f_\delta&=\left(\sum_{i=1}^\ell (-\pa_{\rho_i}^2-k\rho_i^{-1}\pa_{\rho_i}) -E + \Delta_{D_J}\right)f_\delta \\
&=\sum_{i=1}^\ell
\rho_i^{-2}\Big(-(\rho_i\pa_{\rho_i})^2-(k-1+a_i-(\sum_ja_{ij}))(\rho_i\pa_{\rho_i})\Big)f_{\delta}\\
&\qquad-\sum_{i,j=1}^{\ell}\rho_i^{-1}\rho_j^{-1}a_{ij}(\rho_i\pa_{\rho_i})(\rho_j\pa_{\rho_j})f_\delta+\Delta_{D_J}f_\delta\\
&\qquad-\sum \rho_i^{-1}b_{ij}(\rho_i\pa_{\rho_i})\pa_{y_j}f_\delta-\sum
c_{ij}\pa_{y_i}\pa_{y_j}f_\delta
-\sum \tilde b_{ij}\rho_i^{-1}\pa_{y_j}f_\delta.
\end{aligned}\end{equation*}
Then, since 
\[
(\rho_i\pa_{\rho_i})^j (\phi(\rho_i/\delta))=((t\del_t)^j\phi)(\rho_i/\delta),\ j = 1, 2, 
\]
we see that $\langle\rho\rangle^2\rho^{-2\sigma} \Delta f_\delta$ is uniformly bounded as $\delta \searrow 0$ (recall
that $\rho_i\geq\langle\rho\rangle$ for all $i$).   Hence, setting $f = f_\delta$ in \eqref{eq:pair-symm-Delta-f}, then 
\[
|\langle (\Delta f)u,u\rangle| \leq C \|\rho^{\sigma}\langle\rho\rangle^{-1}u\|^2
\]
uniformly in $\delta$. Applying Cauchy-Schwartz to \eqref{eq:pair-symm-Delta-f} thus shows that
\[
2\|\prod_{i=1}^\ell\phi(\rho_i/\delta)\psi(\rho_i))\chi(y)\rho^\sigma\nabla u \|^2\leq C\left(\|\rho^\sigma \langle\rho\rangle^{-1}u\|^2+
\|\rho^\sigma\langle\rho\rangle^{-1}u\|\|\rho^\sigma\langle\rho\rangle\Delta u\|\right),
\]
and as $f_\delta \to f_0 = \psi(\rho_1)^2 \ldots \psi(\rho_\ell)^2 \chi(y)^2\rho^{2\sigma}$ as $\delta \to 0$ we conclude that $\psi(\rho_1)\ldots \psi(\rho_\ell) \rho^\sigma \nabla u \in L^2$ by letting $\delta\to 0$. 
\end{proof}

Another useful property of these spaces is that they localize. 
\begin{lem}\label{lemma:X-r-loc}
Let $\{\psi_\alpha\}$ be the partition of unity described at the end of \S 2. Then $u\in X^{-\sigma}$
implies $\psi_\alpha u\in X^{-\sigma}$.
\end{lem}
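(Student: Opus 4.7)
The plan is to directly verify that $\psi_\alpha u$ satisfies both finiteness conditions in the definition \eqref{Xsig}. The first, $\|\rho^\sigma\langle\rho\rangle^{-1}\psi_\alpha u\|<\infty$, is immediate from $|\psi_\alpha|\leq 1$ and $u\in X^{-\sigma}$, so the entire argument reduces to showing $\rho^\sigma\langle\rho\rangle\Delta(\psi_\alpha u)\in L^2$. If $\supp\psi_\alpha$ lies in the smooth interior of $\calM_\gamma$ away from every divisor, there is nothing to prove: interior elliptic regularity applied to $u,\Delta u\in L^2_{\loc}$ gives $u\in H^2_{\loc}$ on this set, and $\rho^\sigma$ together with $\langle\rho\rangle^{\pm 1}$ are bounded above and below on $\supp\psi_\alpha$. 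Henceforth I assume that $\psi_\alpha$ is supported in one of the product polar coordinate charts adapted to some crossing $D_J$.

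In that case I would expand via the Leibniz rule,
\[
\Delta(\psi_\alpha u)=\psi_\alpha\Delta u-2\langle\nabla\psi_\alpha,\nabla u\rangle_g+(\Delta\psi_\alpha)u,
\]
and estimate the three terms separately after multiplying by $\rho^\sigma\langle\rho\rangle$. The first is bounded pointwise by $|\rho^\sigma\langle\rho\rangle\Delta u|\in L^2$ via $|\psi_\alpha|\leq 1$. The third uses the uniform bound $|\Delta\psi_\alpha|\leq C$ from the end of \S 2 together with $\langle\rho\rangle\leq \rho_0$ on the chart to give
\[
|\rho^\sigma\langle\rho\rangle(\Delta\psi_\alpha)u|\leq C\rho_0^2\,\rho^\sigma\langle\rho\rangle^{-1}|u|,
\]
which lies in $L^2$ since $u\in X^{-\sigma}$.

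The main step, and the place where the previous interpolation result is essential, is the middle cross-derivative term. Cauchy--Schwarz for the metric inner product gives $|\langle\nabla\psi_\alpha,\nabla u\rangle_g|\leq |\nabla\psi_\alpha|\,|\nabla u|$, and combining this with $|\nabla\psi_\alpha|\leq C$ and $\langle\rho\rangle\leq C$ reduces matters to showing that $\rho^\sigma\nabla u\in L^2(\supp\psi_\alpha)$. This is precisely the content of Lemma~\ref{lem:interpolate}, applied on a neighbourhood $\cU_{\rho_0+\ep}$ slightly larger than $\supp\psi_\alpha$. The hard part is thus not the algebra of the Leibniz expansion but the a priori control of $\nabla u$ in a weighted $L^2$ space; the interpolation lemma supplies exactly this control, and once it is in hand the present lemma follows.
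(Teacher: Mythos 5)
Your proof is correct and follows essentially the same route as the paper's: expand $\Delta(\psi_\alpha u)$ by the Leibniz rule, absorb the zeroth- and second-order terms using the bounds on $\psi_\alpha$, $\nabla\psi_\alpha$, $\Delta\psi_\alpha$ from the partition of unity in \S 2, and control the cross term via $\rho^\sigma\nabla u\in L^2$, which is Lemma~\ref{lem:interpolate}. The paper's version is merely terser, leaving the appeal to the interpolation lemma implicit.
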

\begin{proof}
First note that 
$$
\Delta(\psi_\alpha u)=\psi_\alpha\Delta u-2(\nabla \psi_\alpha,\nabla u)_g+(\Delta \psi_\alpha)u.
$$
The expression in the middle is the pointwise inner product with respect to the metric $g$. Since
$\psi_\alpha$, $\langle \rho \rangle \nabla\psi_\alpha$ and $\langle \rho \rangle ^2\Delta \psi_\alpha$
are all bounded (even without the factor of $\langle \rho \rangle$) we obtain 
$\rho^\sigma \langle \rho \rangle \Delta(\psi_\alpha u)\in L^2$. However, $\rho^\sigma \langle \rho\rangle ^{-1}\psi_\alpha
u\in L^2$ as well, so $\psi_\alpha u\in X^{-\sigma}$.
\end{proof}

\begin{cor}\label{cor:X-r-dense}
For any $\sigma \in \RR^\ell$, $\calC^\infty_0(\calM_\gamma)$ is dense in $X^{-\sigma}$. 
\end{cor}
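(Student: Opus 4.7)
The plan is to combine the localization of Lemma~\ref{lemma:X-r-loc} with a cutoff-and-mollify procedure. Using the partition of unity $\{\psi_\alpha\}$ from \S\,2, we reduce to approximating each $\psi_\alpha u\in X^{-\sigma}$ separately, and may therefore assume $u$ is supported either in (a) a chart disjoint from the singular divisors, or (b) a product polar coordinate chart $\calU=(0,\rho_0)^\ell\times(S^1)^\ell\times\calV$ adapted to a crossing $D_J$. In case (a), the weights $\rho^\sigma$ and $\langle\rho\rangle^{\pm 1}$ are bounded above and below on the support, so $u\in H^2$ of the chart, and Friedrichs mollification (applied on the local orbifold cover near the interior singular points, as in the convention for $\calC^\infty_0$) produces $\calC^\infty_0$ approximations converging in $X^{-\sigma}$.

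For case (b), fix $\phi\in\CI(\RR)$ monotone with $\phi(t)=0$ for $t\leq 1/2$ and $\phi(t)=1$ for $t\geq 1$, and set $\Phi_\delta(\rho)=\prod_{i=1}^\ell\phi(\rho_i/\delta)$. I claim $\Phi_\delta u\to u$ in $X^{-\sigma}$ as $\delta\to 0$. Dominated convergence immediately gives $\rho^\sigma\langle\rho\rangle^{-1}\Phi_\delta u\to\rho^\sigma\langle\rho\rangle^{-1}u$ in $L^2$. For the Laplacian term, expand
\[
\Delta(\Phi_\delta u)=\Phi_\delta\Delta u-2(\nabla\Phi_\delta,\nabla u)_g+(\Delta\Phi_\delta)\,u;
\]
the first summand converges to $\Delta u$ in the weighted norm by dominated convergence. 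For the other two, note that $\Phi_\delta$ is nontrivially differentiated only where $\rho_i\in[\delta/2,\delta]$ for some $i$, and on this support $\langle\rho\rangle\leq\rho_i\leq\delta$. The scaling bounds $|\pa_{\rho_i}\Phi_\delta|\leq C\delta^{-1}$ and $|\pa_{\rho_i}^2\Phi_\delta|\leq C\delta^{-2}$, combined with the structural form \eqref{celap3} (in particular, the singular first-order term $k\rho_i^{-1}\pa_{\rho_i}\Phi_\delta$ contributes on the order $\rho_i^{-1}\delta^{-1}\sim\delta^{-2}$ exactly where it is nonzero), yield the uniform estimates
\[
\langle\rho\rangle|\nabla\Phi_\delta|_g\leq C,\qquad \langle\rho\rangle^2|\Delta\Phi_\delta|\leq C.
\]
Since $\rho^\sigma\nabla u\in L^2$ by Lemma~\ref{lem:interpolate} and $\rho^\sigma\langle\rho\rangle^{-1}u\in L^2$ by hypothesis, we obtain the pointwise bounds
\[
|\rho^\sigma\langle\rho\rangle(\Delta\Phi_\delta)u|\leq C|\rho^\sigma\langle\rho\rangle^{-1}u|,\qquad |\rho^\sigma\langle\rho\rangle(\nabla\Phi_\delta,\nabla u)_g|\leq C|\rho^\sigma\nabla u|_g,
\]
each right-hand side lying in $L^2$ and the left-hand sides supported in the shrinking set $\{\rho_i\leq\delta\text{ for some }i\}$. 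Dominated convergence then produces the required vanishing.

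Once $\Phi_\delta u$ is supported in $\{\rho_i\geq\delta/2\text{ for all }i\}$ (and, after multiplying by a compactly supported $y$-cutoff $\chi(y)$ which is handled identically, compactly in $\calU$), the weights are uniformly bounded above and below on its support and $\Delta$ is uniformly elliptic with smooth coefficients there. Hence $\Phi_\delta u\in H^2$ of this compact interior region, and a standard Friedrichs mollification followed by a diagonal argument in $\delta$ produces the desired $\calC^\infty_0(\calM_\gamma)$ approximations to $u$ in the $X^{-\sigma}$ norm. The main obstacle lies in the uniform estimate $\langle\rho\rangle^2|\Delta\Phi_\delta|\leq C$: it expresses the borderline nature of the weight $\langle\rho\rangle^{\pm 1}$, since the singular first-order term $k\rho_i^{-1}\pa_{\rho_i}$ of $\Delta$ contributes to $\Delta\Phi_\delta$ at exactly the same order as the pure second derivatives, and is compensated by the $\langle\rho\rangle^2\sim\delta^2$ factor only because $\langle\rho\rangle\leq\rho_i$ on the support where the cutoff is differentiated.
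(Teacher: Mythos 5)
Your proposal is correct and follows essentially the same route as the paper: localize via Lemma~\ref{lemma:X-r-loc}, use Lemma~\ref{lem:interpolate} to get $\rho^\sigma\nabla u\in L^2$, cut off with $\Phi_\delta$, expand $\Delta(\Phi_\delta u)$, and control the commutator terms via the uniform bounds $\langle\rho\rangle|\nabla\Phi_\delta|\leq C$ and $\langle\rho\rangle^2|\Delta\Phi_\delta|\leq C$ together with dominated convergence. Your scaling derivation of those bounds is just a more explicit version of the paper's identity $(\rho_i\pa_{\rho_i})^j\phi(\cdot/\delta)=((t\del_t)^j\phi)(\cdot/\delta)$, and your final mollification step is the paper's appeal to density of $\calC^\infty_0$ in $H^2_0$.
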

\begin{proof}
Fix any $u\in X^{-\sigma}$. By the previous Lemma, we may as well assume that 
$u$ is supported in $\calU_{\rho_0}$. Lemma~\ref{lem:interpolate} implies that $\rho^\sigma\nabla u\in L^2$. 

Choose the cutoff function $\phi(t)$ as in the proof of Lemma~\ref{lem:interpolate}
and set $\Phi_\delta(\rho) =\phi(\rho_1/\delta) \ldots \phi(\rho_\ell/\delta)$. 
The function $\Phi_\delta u$ is compactly supported in $\calM_\gamma$, and
$\Delta u \in L^2_{\mathrm{loc}}$, so $\Phi_\delta u\in H^2_0(\calM_\gamma)$, and
hence clearly lies in $X^{-\sigma}$. 

We claim that $\Phi_\delta u\to u$ in $X^{-\sigma}$, i.e. 
\[
\rho^{\sigma} \langle \rho \rangle ^{-1}\Phi_\delta u\to \rho^{\sigma}\langle \rho\rangle ^{-1}u,
\quad \mbox{and} \quad \rho^{\sigma }\langle\rho\rangle\Delta(\Phi_\delta u)\to \rho^{\sigma}
\langle\rho\rangle\Delta u
\]
in $L^2$. The former follows from the dominated convergence
theorem. For the latter, we use that
$$
\Delta (\Phi_\delta u)=\Phi_\delta \Delta u-2(\nabla\Phi_\delta,\nabla
u)_g+(\Delta\Phi_\delta)u.
$$
Since $\rho^{\sigma }\langle \rho \rangle \Delta u\in L^2$, dominated convergence gives that 
$\rho^{\sigma}\langle\rho\rangle \Phi_\delta\Delta u\to\rho^{\sigma}\langle\rho\rangle \Delta u$.
In addition $\rho^\sigma\nabla u\in L^2$ and $\rho^{\sigma}\langle \rho \rangle^{-1}u\in L^2$, 
so the estimate for the remaining term follows from the fact that 
$\langle \rho \rangle\nabla\Phi_\delta$ and $\langle \rho\rangle^2\Delta
\Phi_\delta$ are uniformly bounded and converge to $0$ pointwise.  
These bounds on $\Phi_\delta$ follow from $(\rho_i\pa_{\rho_i})^k
\phi(./\delta)=((t\del_t)^k\phi)(./\delta)$ (cf.\ the proof of Lemma~\ref{lem:interpolate}), and 
the pointwise convergence is clear. 
This completes the proof of the convergence claim, and the corollary follows from the density 
of $\calC^\infty_0(\calM_\gamma)$ in $H^2_0(\calM_\gamma)$.
\end{proof}

\subsection{Improving the decay rate}
The only reasonable general estimate for the weighted norm of $\nabla u$
involves the weighted norms of $\Delta u$ and on $u$, with closely related 
powers of the weight function.  However, there is a critical range of weights
in which one can estimate $\|\rho^r \nabla u\|_{L^2}$ using $\|\rho^{r} 
\langle \rho \rangle \Delta u\|_{L^2}$ but a much weaker norm of $u$. 
We explain this now, and in particular develop localized versions of these estimates.

\begin{lem}\label{lemma:Delta-X-r_0-loc}
Let $\psi_\alpha$ be the partition of unity defined at the end of \S 2.  Fix $\sigma, \sigma_0\in\RR^\ell$
such that $\sigma_0- \ell^{-1} \leq \sigma < \sigma_0$.  Let $u\in X^{-\sigma_0}$ satisfy
$\rho^{\sigma} \langle \rho \rangle \Delta u\in L^2$. Then $\rho^{\sigma} \langle\rho\rangle \Delta(\psi_\alpha 
u)\in L^2$.
\end{lem}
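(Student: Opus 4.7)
The plan is to apply the Leibniz-style identity
\[
\Delta(\psi_\alpha u) = \psi_\alpha \Delta u - 2(\nabla \psi_\alpha, \nabla u)_g + (\Delta \psi_\alpha) u,
\]
and check that each of the three terms on the right lies in $\rho^{-\sigma}\langle\rho\rangle^{-1} L^2$. Since $\psi_\alpha$ is supported either away from every divisor (where the claim is trivial) or in one of the product polar coordinate charts $\calU_{\rho_0}$ constructed in \S 2, I may assume throughout that $\rho_i \leq \rho_0 < 1$ on the support of $\psi_\alpha$, and work in such a chart.

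The key preparatory step is a purely algebraic inequality. Interpreting $\sigma_0 - \ell^{-1} \leq \sigma$ componentwise gives $0 < (\sigma_0)_i - \sigma_i \leq 1/\ell$ for each $i$, hence $\sum_i ((\sigma_0)_i - \sigma_i) \leq 1$. Writing $\rho_\ast = \min_i \rho_i$ and using $\rho_\ast < 1$,
\[
\rho^{\sigma_0 - \sigma} = \prod_i \rho_i^{(\sigma_0)_i - \sigma_i} \geq \rho_\ast^{\sum_i ((\sigma_0)_i - \sigma_i)} \geq \rho_\ast \geq \langle\rho\rangle,
\]
the last step because $\langle\rho\rangle \leq \rho_j$ for every $j$. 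Combined with $\langle\rho\rangle\leq \rho_0$ this yields the two pointwise bounds
\[
\rho^\sigma \langle\rho\rangle \leq C\,\rho^{\sigma_0}, \qquad \rho^\sigma \langle\rho\rangle^2 \leq C\,\rho^{\sigma_0},
\]
which are what will absorb the gap between the two weights.

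With these in hand the three terms go as follows. The first term $\psi_\alpha \Delta u$ is immediate: $\psi_\alpha$ is bounded, so $\rho^\sigma\langle\rho\rangle\Delta u \in L^2$ by hypothesis gives $\rho^\sigma\langle\rho\rangle \psi_\alpha\Delta u \in L^2$. The third term $(\Delta\psi_\alpha)u$: by the construction of the partition of unity at the end of \S 2, $\Delta\psi_\alpha$ is bounded, so it suffices to estimate $\|\rho^\sigma\langle\rho\rangle u\|_{L^2}$, and the second algebraic inequality above together with $u \in X^{-\sigma_0}$ gives $\|\rho^\sigma\langle\rho\rangle u\|_{L^2} \leq C\|\rho^{\sigma_0}\langle\rho\rangle^{-1} u\|_{L^2} < \infty$.

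The remaining middle term is the only one requiring a regularity input for $\nabla u$, and is what I expect to be the main (though still mild) obstacle. Since $|\nabla\psi_\alpha|_g$ is uniformly bounded, it suffices to show $\rho^\sigma\langle\rho\rangle|\nabla u|_g \in L^2$ on $\supp\psi_\alpha$. Here I invoke Lemma~\ref{lem:interpolate}, which applied to $u \in X^{-\sigma_0}$ on a slightly enlarged neighborhood $\calU_{\rho_0+\ep}$ (still contained in the chart) yields $\rho^{\sigma_0}\nabla u \in L^2(\calU_{\rho_0})$. The first algebraic bound $\rho^\sigma\langle\rho\rangle \leq C\rho^{\sigma_0}$ then finishes the estimate. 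Combining the three pieces gives $\rho^\sigma\langle\rho\rangle\Delta(\psi_\alpha u) \in L^2$, as required.
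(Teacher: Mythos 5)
Your proof is correct and follows essentially the same route as the paper: the same Leibniz expansion, the same key pointwise comparison $\rho^{\sigma-\sigma_0}\langle\rho\rangle\leq C$ (you derive it via $\langle\rho\rangle\leq\min_i\rho_i$, the paper via $\langle\rho\rangle\leq C(\rho_1\cdots\rho_\ell)^{1/\ell}$), and the same appeal to Lemma~\ref{lem:interpolate} for $\rho^{\sigma_0}\nabla u\in L^2$. Your write-up is in fact slightly more explicit than the paper's about which weight bound handles each of the three terms.
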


Note that the conclusion is, up to one order of decay, better than the assumption $u\in X^{-\sigma_0}$ which 
implies that $\rho^{\sigma_0}\langle\rho\rangle\Delta (\psi_\alpha u)\in L^2$.

\begin{proof}
Expanding 
\[
\Delta(\psi_\alpha u)=\psi_\alpha\Delta u-2(\nabla \psi_\alpha, \nabla u)_g +(\Delta \psi_\alpha)u,
\]
and using that $\rho^\sigma \langle \rho \rangle \Delta u \in L^2$ and $\nabla \psi_\alpha$ and $\Delta \psi_\alpha$
are bounded, we see that it suffices to check that
\[
\rho^{\sigma - \sigma_0} \langle \rho \rangle \leq C.
\]
However, this follows since $\sigma_j - (\sigma_0)_j \geq -1/\ell$ for every $j$ and
\[
\langle \rho \rangle = ( \rho_1 \ldots \rho_\ell)^{1/\ell} R,
\]
where $R \in L^\infty$. 
\end{proof}

We now prove a stronger version of this, that a relatively weak weighted bound on $u$
and a stronger weighted bound on $\Delta u$ imply 
We can now prove that just information on the weighted space $\Delta
u$ sits in suffices to obtain information on $u$ and $\nabla u$ if at
least weak a priori weighted information on $u$ is available.

\begin{lem}\label{lemma:indicial-gap}
Let $-\frac{k-2}{4}<\sigma<\sigma _0 <\frac{k-1}{2}$, and suppose that $\rho^{\sigma_0}\langle \rho \rangle^{-1} u\in L^2$
and $\rho^{\sigma}\langle \rho \rangle\Delta u\in L^2$; then $u \in X^{-\sigma}$. 
\end{lem}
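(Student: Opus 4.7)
The plan is to apply the symmetric integration-by-parts identity
\[
2\operatorname{Re}\langle \Delta u, f u\rangle = 2\|f^{1/2}\nabla u\|^2 + \langle (\Delta f)u, u\rangle
\]
derived in the proof of Lemma~\ref{lem:interpolate}, with the weight $f = (\rho_1\cdots \rho_\ell)^{2\sigma}$ multiplied by appropriate cutoffs. After localizing via Lemma~\ref{lemma:X-r-loc}, we may assume $u$ is supported in a small product polar neighbourhood of some stratum $D_J$, and introduce the cutoff family $f_\delta = \chi(y)^2\prod_i \phi(\rho_i/\delta)^2\psi(\rho_i)^2\rho_i^{2\sigma}$ exactly as in the proof of Lemma~\ref{lem:interpolate}. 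The hypothesis $\rho^{\sigma_0}\langle\rho\rangle^{-1}u\in L^2$ with $\sigma_0>\sigma$, combined with $\rho^\sigma\langle\rho\rangle\Delta u\in L^2$, yields $u\in X^{-\sigma_0}$, which is exactly what we need to ensure that every pairing in the identity is finite for each $\delta>0$.

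For the main computation, the structural hypothesis \eqref{celap3} gives
\[
\Delta_g(\rho^{2\sigma}) = -2\sigma(2\sigma + k - 1)\sum_i \rho_i^{-2}\rho^{2\sigma} + O(|\rho|^\eta)\cdot\rho^{2\sigma}\sum_i \rho_i^{-2},
\]
and the cutoff contributions are controlled by the familiar fact that $\langle\rho\rangle^2\Delta f_\delta/f_\delta$ is uniformly bounded (using $(\rho_i\partial_{\rho_i})^j\phi(\cdot/\delta)=((t\partial_t)^j\phi)(\cdot/\delta)$). Abbreviating $A = \|\rho^\sigma\langle\rho\rangle^{-1}u\|$, $B = \|\rho^\sigma\langle\rho\rangle\Delta u\|$, $N = \|\rho^\sigma\nabla u\|$, and $S^2 = \sum_i \|\rho^\sigma \rho_i^{-1}u\|^2$ (for which $S^2\leq A^2\leq \ell S^2$ by elementary comparison), Cauchy--Schwarz bounds $|\langle \Delta u,\rho^{2\sigma}u\rangle|\leq AB$, so the identity reads
\[
2N^2 \leq 2AB + 2\sigma(2\sigma+k-1)S^2 + (\text{absorbable error}).
\]
The multivariable Hardy inequality from the previous subsection gives $S\leq \tfrac{2}{2\sigma+k-1}N$; substituting back produces an algebraic inequality of the form $cA^2 \leq C\,AB$ with $c>0$, whence $A\leq (C/c)B$. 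Monotone convergence as $\delta\to 0$ then shows $\rho^\sigma\langle\rho\rangle^{-1}u\in L^2$, and $u\in X^{-\sigma}$ follows.

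The main obstacle is arranging that the constant $c$ in the algebraic inequality is strictly positive after all absorptions. The upper threshold $\sigma<(k-1)/2$ is what forces the Hardy-derived coefficient $(k-1-2\sigma)/(2\sigma+k-1)$ to be positive, which is where the $S^2$ term (with sign $\sigma(2\sigma+k-1)$, possibly positive) must be absorbed against $N^2$. The lower threshold $\sigma>-(k-2)/4$ controls the perturbation errors coming from $E$ in \eqref{celap2} and from the Jacobian $\mathcal J$: these contribute terms proportional to $O(|\rho|^\eta)A^2$ with a constant depending on $\ell$, and the stated lower bound is precisely what leaves enough room in the Hardy constant $\tfrac{4\ell}{(2\sigma+k-1)^2}$ to absorb them after being combined with the main identity. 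A secondary technical point is the rigorous $\delta\to 0$ limit, which is standard dominated/monotone convergence once the uniform-in-$\delta$ algebraic bound is in hand, using the a priori control $u\in X^{-\sigma_0}$ as the dominating function.
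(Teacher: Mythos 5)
Your overall strategy is the paper's: localize, apply the identity \eqref{eq:pair-symm-Delta-f} with $f\approx\rho^{2\sigma}$ times cutoffs, combine with the Hardy inequality, and absorb. But there is a genuine gap in how you regularize. You take the cutoffs $\phi(\rho_i/\delta)$ ``exactly as in the proof of Lemma~\ref{lem:interpolate}'', i.e.\ sharp cutoffs vanishing for $t\le 1/2$. For those, the cutoff contribution to $\langle(\Delta f_\delta)u,u\rangle$ is of size $O(1)\cdot\rho^{2\sigma}\rho_i^{-2}$ on the transition region $\rho_i\sim\delta$, with no sign and no small parameter; it is \emph{not} damped by the factor $\Phi_\delta^2$ appearing in your regularized quantities $A_\delta,S_\delta$. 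In Lemma~\ref{lem:interpolate} this is harmless because $\rho^\sigma\langle\rho\rangle^{-1}u\in L^2$ is a hypothesis there, so a term $CA^2$ on the wrong side is finite and acceptable. Here $A$ is exactly what you are trying to prove finite: your final inequality is $cA_\delta^2\le CA_\delta B$ with $c$ small, and an extra uncontrolled $+C'A^2$-type error (or, estimated via the a priori bound, $+C\delta^{2(\sigma-\sigma_0)}\|\rho^{\sigma_0}\langle\rho\rangle^{-1}u\|^2_{\{\rho_i\sim\delta\}}$, which need not vanish as $\delta\to 0$) destroys the absorption. The heart of the paper's proof is precisely the replacement of the sharp cutoff by $\phi_j(t)=(t/(1+t))^{\beta_j}$ and the verification that $\Delta f_\delta+\sum_iC_i\rho_i^{-2}f_\delta\ge 0$ for some $C_i$ strictly \emph{below} the Hardy threshold $(2\sigma_i+k-1)^2/2$ (the computation showing $g_j(t)\le 0$); the cutoff defect then has a favorable sign and is simply discarded, rather than estimated in absolute value. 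Your proposal never confronts this, and it is the step that would fail.

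Two secondary points. First, with the power-type cutoffs the condition $\rho^{-2\sigma_0'}f_\delta\in L^\infty$ forces $\beta\ge\sigma_0'-\sigma$, while the admissibility window \eqref{late} bounds $\beta_j$ above by $\gamma_0(\tfrac{k-1}{2}-\sigma_j)$; hence each application only improves the weight by a limited amount and the lemma requires a finite iteration, which your one-shot argument omits. Second, your explanation of the lower threshold is off: $\sigma>-\tfrac{k-2}{4}$ is not about absorbing the perturbation $E$ or the Jacobian (those are handled by shrinking the support so that $|\rho|^\eta$ is small); it enters through the requirement $4\sigma_j+k-2>0$ in the monotonicity analysis of $g_j$, i.e.\ through the sign of the model cutoff contribution.
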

\begin{proof}
It suffices to prove the lemma assuming only that $\sigma \geq \sigma_0 - 1/\ell$, for once we have established
this, then we may iterate a finite number of times to obtain the result as stated. Furthermore, we 
can also replace $u$ by $\psi_\alpha u$, so as to work in a single coordinate chart. 

Recall from Lemma~\ref{lem:interpolate} that if $u \in X^{-\sigma_0'}$ for any $\sigma_0'$, then
$\rho^{\sigma'_0}\nabla u\in L^2$.   Now, consider the first and third terms in \eqref{eq:pair-symm-Delta-f},
and rewrite these as follows: 
\[
2\mbox{Re}\, \langle \Delta u, f u \rangle = 2 \mbox{Re}\, \big\langle \rho^{\sigma_0'} \langle \rho \rangle \Delta u, 
(\rho^{-2\sigma_0'} f)  \rho^{\sigma_0'}  \langle \rho \rangle^{-1} u \big\rangle
\]
and
\[
\langle (\Delta f) u, u \rangle = \big\langle ( \rho^{-2\sigma_0'} \langle \rho \rangle^2 \Delta f)  \rho^{\sigma_0'} \langle \rho 
\rangle^{-1} u, \rho^{\sigma_0'} \langle \rho \rangle^{-1} u \big\rangle. 
\]
From this it follows that if $f$ is smooth but not necessarily compactly supported, and
\begin{equation}
\rho^{-2\sigma'_0}f, \ \rho^{-2\sigma'_0}\langle \rho \rangle^2\Delta f \in L^\infty,
\label{bounds-f}
\end{equation}
then these two terms yield continuous bilinear forms on $X^{-\sigma'_0}$.   Hence, since
\eqref{eq:pair-symm-Delta-f} holds for all $u$ lying in the dense subspace $\calC^\infty_0(\calM_\gamma)$,
it extends by continuity to all of $X^{-\sigma_0'}$;  in other words, \eqref{eq:pair-symm-Delta-f} holds
for all elements of this space. 

The next step is a judicious choice of the function $f$, or rather, a family 
of such functions, satisfying these properties.  We define
$$
f=f_\delta= \Phi_\delta(\rho)^2 \rho^{2\sigma}, \quad \mbox{where} \quad
\phi_j(t)= \left(\frac{t}{1+t}\right)^{\beta_j},\qquad\beta_j>0,
$$
where, as before, $\Phi_\delta$ is the product of the functions $\phi_j(\rho_j/\delta)$ over $j = 1, \ldots, \ell$. 
It is clear that both conditions in \eqref{bounds-f} hold provided $\sigma+\beta\geq \sigma'_0$. 

We claim now that 
\begin{equation}\label{eq:Lap-of-f}
\Delta f_\delta+\sum_{i=1}^\ell C_i\rho_i^{-2}f_\delta\geq 0\quad \mbox{for some}\ C_i<\frac{(2\sigma_i+k-1)^2}{2}.
\end{equation}
This will be established below.  For later use, fix $b_1 >0$ and $b_2 \in(0,2)$ such that
\begin{equation}
C_i+ b_1 < b_2 \frac{(2\sigma_i+k-1)^2}{4} \ \Leftrightarrow \ 
-b_2 \frac{(2\sigma_i+k-1)^2}{4}  + b_1 < -C_i
\label{consts}
\end{equation}
for all $i = 1, \ldots, \ell$.  

Since the functions $\phi_j$ defined here satisfy $\phi_j'\geq 0$, the proof of the {\em regularized} Hardy inequality 
\eqref{eq:reg-Hardy} carries through exactly as before, giving
\begin{equation*}
\sum_{i=1}^\ell \frac{(2\sigma_i+k-1)^2}{4} \|\Phi_\delta(\rho) \rho^{\sigma}\rho_i^{-1}u\|^2 \leq 
\sum_{i=1}^\ell \| \Phi_\delta(\rho) \rho^{\sigma}\pa_{\rho_i} u\|^2  \leq 
\| \Phi_\delta(\rho) \rho^{\sigma}\nabla u\|^2. 
\end{equation*}
Rearranging this and using \eqref{consts}, we deduce 
\begin{equation*}\begin{split}
(2-&b_2 )\| \Phi_\delta(\rho) \rho^{\sigma}\nabla u\|^2 + b_1 \sum_{i=1}^\ell\| \Phi_\delta(\rho) \rho^{\sigma}\rho_i^{-1}u\|^2\\
&\leq 2\| \Phi_\delta(\rho) \rho^{\sigma}\nabla u\|^2 +\langle (\Delta f_\delta) u,u\rangle, 
\end{split}\end{equation*}
and hence, recognizing this as the right side of \eqref{eq:pair-symm-Delta-f} with our particular choice of $f$ and using
the Cauchy-Schwartz inequality, we bound this expression by 
\begin{equation*}\begin{split}
& 2\| \Phi_\delta(\rho) \rho^{\sigma}\langle \rho \rangle^{-1}u\|\|\Phi_\delta(\rho) \rho^{\sigma}\langle \rho \rangle\Delta u\|\\
&\leq \frac{b_1}{2\ell} \|\Phi_\delta(\rho) \rho^{\sigma}\langle \rho \rangle^{-1}u\|^2+
\frac{2\ell}{b_1}\|\Phi_\delta(\rho) \rho^{\sigma}\langle \rho \rangle\Delta u\|^2.
\end{split}\end{equation*}
Now, $\langle \rho \rangle^{-2}\leq \ell \sum_{i=1}^\ell\rho_i^{-2}$, so the first term on
the right hand side can be absorbed in the second term on the left hand side. Bounding this 
new term from below by exactly the same inequality, we obtain finally that
\begin{equation*}
(2-b_2)\|\Phi_\delta(\rho) \rho^{\sigma}\nabla u\|^2 +\frac{b_1}{2\ell }\|\Phi_\delta(\rho) \rho^{\sigma}
\langle \rho \rangle^{-1}u\|^2 \leq\frac{2\ell}{b_1}\| \Phi_\delta(\rho) \rho^{\sigma}\langle \rho \rangle\Delta u\|^2.
\end{equation*}
To conclude the argument, let $\delta\searrow 0$; this shows that $\rho^\sigma\nabla u$ and 
$\rho^{\sigma}\langle \rho \rangle^{-1}u$ lie in $L^2$, as desired.

It remains to show that \eqref{eq:Lap-of-f} holds for some
$C_i<\frac{(2\sigma_i+k-1)^2}{2}$. First,
\begin{equation*}\begin{aligned}
\Delta
f_\delta
&=\sum_{i=1}^\ell
\rho_i^{-2}\Big(-(\rho_i\pa_{\rho_i})^2-(k-1+a_i-(\sum_ja_{ij}))(\rho_i\pa_{\rho_i})\Big)f_{\delta}\\
&\qquad-\sum_{i,j=1}^{\ell}\rho_i^{-1}\rho_j^{-1}a_{ij}(\rho_i\pa_{\rho_i})(\rho_j\pa_{\rho_j})f_\delta
\end{aligned}\end{equation*}
with $|a_i|,|a_{ij}|\leq C|\rho|^{\eta}$.
A straightforward but somewhat lengthy computation gives that
\begin{multline*}
\Delta f_\delta+\sum_{i=1}^lC_i\rho_i^{-2}f_\delta= \\
-\Phi_\delta(\rho)^2 \rho^{2\sigma} \Big(\sum_{j=1}^\ell \rho_j^{-2}
g_j(\rho_j/\delta)+\sum_{i,j=1}^{\ell} \rho_i^{-1}\rho_j^{-1}\tilde h_{ij}(\rho_i/\delta,\rho_j/\delta)\Big).
\end{multline*}
Here
\begin{equation*}
\begin{split}
g_j(t) & = \phi_j(t)^{-2}\left( (t\del_t)^2 + (4\sigma_j + k-1)
  t\del_t + 2\sigma_j (2 \sigma_j + k-1) - C_j\right) \phi_j(t)^2  \\ 
& = \frac{2\beta_j( 2\beta_j + 1)}{(1+t)^2} + \frac{ (4\sigma_j + k-2) 2\beta_j}{1+t} + 2\sigma_j(2\sigma_j + k-1) - C_j
\end{split}
\end{equation*} 
results from the model part of the Laplacian, while $\tilde h_{ij}$ comes from applying the remainder terms
$a_i$ and $a_{ij}$, so that 
\[
|\rho|^{-\eta}|\tilde h_{ij}(s,t)|\leq\tilde C,\ s,t\geq 0
\]
for some $\tilde{C} > 0$. 

To do this computation efficiently, say for the $g_j$ term, note first that commuting the factor $\rho^{2\sigma}$ in $f$ through
each differential expression $\rho_i\pa_{\rho_i}$ replaces this expression by $\rho_i\pa_{\rho_i}+2\sigma_i$. Next, 
\[
(\rho_i\pa_{\rho_i}+2\sigma_i)(\phi_i(\rho_i/\delta)^2) =(t\pa_t+2\sigma_i)\phi_i^2|_{t=\rho_i/\delta},
\]
so we have reduced to computing the action of a second order regular singular ordinary differential operator
\[
-(t\pa_t+2\sigma_i)^2-(k-1)(t\pa_t+2\sigma_i).
\]
on the function $(t/(1+t))^{\beta_i}$, which is straightforward. 

Now, if $-g_j(t)\geq \delta>0$ for all $j$ and for some $C_j<\frac{(2\sigma_j+k-1)^2}{2}$, then using
$\rho_i^{-1}\rho_j^{-1}\leq\frac{1}{2}(\rho_i^{-2}+\rho_j^{-2})$, one can estimate
\[
|\sum_{i,j=1}^{\ell} \rho_i^{-1}\rho_j^{-1}\tilde h_{ij}(\rho_i/\delta,\rho_j/\delta)|\leq|\rho|^{\eta}\tilde
C\ell\sum_{i=1}^\ell \rho_i^{-2}.
\]
Consequently, assuming that $|\rho|$ is sufficiently small, which is possible here by adjusting the partition 
of unity, we deduce that \eqref{eq:Lap-of-f} holds.

We now wish to show that $g_j(t) \leq 0$ for $t \geq 0$. To this end, note that 
\[
g_j(0) = 2 (\beta_j + \sigma_j) (2\beta_j + 2 \sigma_j + k-1) - C_j,
\]
so this must certainly be nonpositive.   Furthermore, 
\[
\begin{split}
g_j'(t) & = - \frac{4\beta_j (2\beta_j + 1)}{(1+t)^3} - \frac{2\beta_j(4\sigma_j + k-2)}{(1+t)^2} \\
& = -\frac{2\beta_j}{(1+t)^3} \big( 2 (2\beta_j + 1) + (4 \sigma_j + k-2) (1+t) \big),
\end{split}
\]
so we wish that
\[
4\beta_j + 2 + (4\sigma_j + k-2)(1+t) \geq 0. 
\]
Since $4\sigma_j + k-2 > 0$, this is bounded below by $4(\beta_j + \sigma_j) + k$. 

Setting $\gamma_j = \beta_j + \sigma_j$, we have now shown that $g_j(t) \leq 0$ provided
\[
4\gamma_j + k \geq 0 \quad \mbox{and}\quad  2 \gamma_j (2\gamma_j + k-1) - C_j \leq 0. 
\]
Calculating the roots of this quadratic equation, we see that
\[
-\frac{k}{4} \leq \gamma_j \leq - \frac{k-1}{4} + \frac{1}{2}\sqrt{ \frac{(k-1)^2}{4} + C_j}
\]
implies $g_j(t) \leq 0$ for $t \geq 0$.
The leftmost term here is always less than the rightmost provided $C_j > 0$, so there is always a permissible
range for $\gamma_j$. Indeed, combining the various restrictions above, we see that we must choose
\begin{equation}
2 (\sigma_j + \beta_j) (2 (\sigma_j + \beta_j) + k-1) < C_j < \frac{ (2\sigma_j + k-1)^2}{2}. 
\label{late}
\end{equation}
Once again, setting $\beta_j = 0$, the leftmost term is less than the rightmost term precisely when
\[
- \frac{k-1}{2} < \sigma_j < \frac{k-1}{2},
\]
so there is certainly some allowable interval for $C_j$ provided $\beta_j$ is sufficiently small.

Since we wish to iterate the argument above, it is useful to estimate how large we can choose $\beta_j$
so that there still exists an admissible $C_j$. One half the
difference between the left and right hand side of \eqref{late} equals
\begin{equation*}
\begin{split}
&\frac{1}{2}\left(2(\sigma_j+\beta_j)(2(\sigma_j+\beta_j)+k-1)-\frac{(2\sigma_j+k-1)^2}{2}\right)\\
&\qquad=\Big(\sigma_j-\frac{k-1}{2}\Big) \Big(\sigma_j+\frac{k-1}{2}\Big)+4\beta_j \Big(\sigma_j+\frac{k-1}{4}\Big)+2\beta_j^2.
\end{split}\end{equation*}
Taking $\beta_j=\gamma(\frac{k-1}{2}-\sigma_j)$, which is positive provided $\gamma > 0$, gives 
$$
\Big(\sigma_j-\frac{k-1}{2}\Big) \Big\{\Big(\sigma_j+\frac{k-1}{2}\Big)-4\gamma\Big(\sigma_j+\frac{k-1}{4}\Big)+
2\gamma^2 \Big(\sigma_j-\frac{k-1}{2}\Big)\Big\}.
$$
Since $\sigma_j - \frac12 (k-1) < 0$, we need the other factor to be positive.  Let us write this factor as
$2A \gamma^2 - 4B \gamma + C$.  Thus
\[
A = \sigma_j - \frac{k-1}{2} < 0,\quad B = \sigma_j + \frac{k-1}{4} > \frac14 \quad \mbox{and}\ C = \sigma_j + \frac{k-1}{2} > \frac{k}{4}.
\]
These sign conditions and the quadratic formula show that there is one positive and one negative root of this quadratic equation,
and since the leading coefficient $2A$ is negative, we see that there is a $\gamma_0 > 0$ depending only on $k$ such that if 
$\gamma \in (0, \gamma_0)$, and with $\beta_j$ chosen as above, then there is indeed a gap between the left and right
sides of \eqref{late}.  In other words, 
we have now proved that if $-\frac{k-2}{4}<\sigma_j<\frac{k-1}{2}$ and if 
$\rho^{\sigma'_0}\langle \rho \rangle^{-1}u,\rho^{\sigma'_0}\langle \rho \rangle\Delta u\in L^2$,  with
$\sigma_0'=\sigma +\beta= \sigma+\gamma\Big(\frac{k-1}{2}-\sigma\Big)$, for any fixed 
$\gamma\in (0,\gamma_0)$, then $\rho^\sigma \nabla u$ and $\rho^{\sigma}\langle \rho \rangle^{-1}u$
both lie in $L^2$.    This shows that starting with $\sigma_0<\frac{k-1}{2}$, then we can iterate the
entire argument a finite number of times to conclude that $\rho^\sigma \nabla u,\rho^{\sigma}\langle \rho \rangle^{-1}u\in L^2$
for any $\sigma >-\frac{k-2}{4}$. 
\end{proof}

There is a variant of this result which holds even in the borderline case $\sigma_0= \frac{k-1}{2}$. 
We lose a bit, however, in that we can only estimate some combination of $\nabla u$ and
$\langle \rho \rangle^{-1}u$, but not these two terms separately.

\begin{lem}\label{lemma:indicial-bdy}
Set $\sigma_0=\frac{k-1}{2}$ and suppose that $\max(0,\sigma_0-1/\ell)\leq \sigma < \sigma_0$, and 
$\sigma \neq 0$. Suppose furthermore that the rate of decay $\eta$ for the error terms in the metric satisfies
$\eta \geq 1$ (note, this is certainly true for $g_{\WP}$).  If $\rho^{\sigma_0}\langle 
\rho \rangle^{-1}u\in L^2$ and $\rho^{2\sigma-\sigma_0}\langle \rho \rangle\Delta u\in L^2$ then $\rho^{\sigma-2\sigma_0}\nabla 
\rho^{2\sigma_0} u\in L^2$.
\end{lem}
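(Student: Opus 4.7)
The plan is to reuse the framework of Lemma~\ref{lemma:indicial-gap}, starting from the basic identity \eqref{eq:pair-symm-Delta-f},
\[
2\,\mathrm{Re}\,\langle \Delta u, f u\rangle = 2\|f^{1/2}\nabla u\|^2 + \langle(\Delta f)u,u\rangle,
\]
applied with the weight $f = f_\delta = \Phi_\delta^2 \rho^{2\sigma}$---that is, without any of the auxiliary $(\rho_j/(1+\rho_j))^{\beta_j}$ factors used previously. Those factors opened the strict gap \eqref{late}, but the gap collapses at $\sigma_0 = (k-1)/2$, so no perturbation $\beta_j > 0$ is admissible. In place of the $\beta_j$-slack I will exploit an exact algebraic cancellation available only at this value of $\sigma_0$.

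The heart of the argument is the identity, obtained by expanding the square and integrating $2\,\mathrm{Re}\,((\partial_{\rho_j} u)\bar u) = \partial_{\rho_j}|u|^2$ by parts in $\rho_j$ against the measure $d\mu$,
\[
\sum_j \|\rho^\sigma \partial_{\rho_j} u + 2\sigma_0 \rho^\sigma \rho_j^{-1} u\|^2 = \sum_j \|\rho^\sigma \partial_{\rho_j} u\|^2 - 4\sigma\sigma_0 \sum_j \|\rho^\sigma \rho_j^{-1} u\|^2.
\]
Since $\partial_{\theta_i}$ and $\partial_{y_\mu}$ commute with multiplication by $\rho^{2\sigma_0}$, this rearranges to
\[
\|\rho^{\sigma-2\sigma_0}\nabla(\rho^{2\sigma_0} u)\|^2 = \|\rho^\sigma \nabla u\|^2 - 4\sigma\sigma_0 \sum_j \|\rho^\sigma \rho_j^{-1} u\|^2.
\]
A direct computation using the model part of \eqref{celap3} gives $\Delta_{\mathrm{mod}}\rho^{2\sigma} = -\sum_j 4\sigma(\sigma+\sigma_0)\rho_j^{-2}\rho^{2\sigma}$, so substituting into the basic identity and using the previous display yields, up to error terms,
\[
\|\rho^{\sigma-2\sigma_0}\nabla(\rho^{2\sigma_0} u)\|^2 + 2\sigma(\sigma_0 - \sigma)\sum_j \|\rho^\sigma \rho_j^{-1} u\|^2 = \mathrm{Re}\,\langle \Delta u, \rho^{2\sigma} u\rangle.
\]
Because $0<\sigma<\sigma_0$, both terms on the left are nonnegative, and the right-hand side is bounded via Cauchy-Schwartz by $\|\rho^{2\sigma-\sigma_0}\langle\rho\rangle\Delta u\|\cdot\|\rho^{\sigma_0}\langle\rho\rangle^{-1} u\|$, which is finite by hypothesis. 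Letting $\delta\searrow 0$ by dominated convergence and approximating by $\mathcal{C}^\infty_0$ functions as in Corollary~\ref{cor:X-r-dense} then gives the conclusion.

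The hard part will be absorbing the perturbation errors, since the identity above is \emph{sharp}: the only positive term on the left available for absorption is $2\sigma(\sigma_0-\sigma)\sum_j\|\rho^\sigma\rho_j^{-1} u\|^2$, and there is no $\beta$-slack. The errors come from two sources: the deviation of $\Delta$ from $\Delta_{\mathrm{mod}}$ on $\rho^{2\sigma}$ (through the $a_i,a_{ij}$ terms of \eqref{celap3}), and the derivatives of the measure Jacobian (through $\partial_{\rho_j} A$ in \eqref{newconditions}); both are pointwise bounded by $|\rho|^\eta\rho_i^{-1}\rho_j^{-1}\rho^{2\sigma}$, and after localising so that $|\rho|\leq \rho_0$ they are bounded globally by $C\rho_0^\eta\sum_j\|\rho^\sigma\rho_j^{-1} u\|^2$. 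The hypothesis $\eta\geq 1$, together with shrinking $\rho_0$ via the partition of unity of \S 2, provides precisely the room needed to absorb these corrections into the left-hand side and close the argument.
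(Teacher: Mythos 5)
Your central identity is correct, and it is in fact the same device the paper uses: writing $2\sigma_0=k-1$, your recentered gradient $\partial_{\rho_j}u+2\sigma_0\rho_j^{-1}u$ is exactly the $\rho_j$-component of $\rho^{-(k-1)}\nabla\rho^{k-1}u$, and your final display
\[
\|\rho^{\sigma-2\sigma_0}\nabla(\rho^{2\sigma_0}u)\|^2+2\sum_j\sigma_j(\sigma_0-\sigma_j)\|\rho^\sigma\rho_j^{-1}u\|^2
=\mathrm{Re}\,\langle\Delta u,\rho^{2\sigma}u\rangle
\]
is precisely the paper's identity \eqref{eq:recentered-est} evaluated on $f=\rho^{2\sigma}$ and expanded. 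The Cauchy--Schwarz step and the absorption of the $a_i,a_{ij},\partial_{\rho_j}A$ perturbations into the positive term $\sum_j\sigma_j(\sigma_0-\sigma_j)\|\rho^\sigma\rho_j^{-1}u\|^2$ (using $\sigma>0$, $\sigma<\sigma_0$, and small $|\rho|$) are also sound.

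The gap is in the regularization. You explicitly discard the factors $\phi_j(t)=(t/(1+t))^{\beta_j}$ on the grounds that ``no perturbation $\beta_j>0$ is admissible'' at $\sigma_0=\frac{k-1}{2}$; this conflates two different roles of $\beta_j$. In Lemma~\ref{lemma:indicial-gap} the $\beta_j$ open the gap in \eqref{late}; here the paper takes $\beta_j=\sigma_0-\sigma_j>0$ (perfectly admissible, since $\sigma<\sigma_0$ strictly) for a different purpose: it makes $f_\delta\sim\delta^{-2\beta}\rho^{2\sigma_0}$ near the divisor, so the bilinear forms are continuous on $X^{-\sigma_0}$ uniformly enough, and, crucially, it makes \emph{every} $\delta$-dependent term in the remainder $\Delta f_\delta-2(\rho^{-(k-1)}\nabla\rho^{k-1})^*(f_\delta\rho^{-(k-1)}\nabla\rho^{k-1})$ pointwise nonnegative, so the cutoff contributions can simply be dropped. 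With your plain cutoff $\Phi_\delta$, the terms where derivatives hit $\Phi_\delta$ are supported in the transition shells $\{\delta/2\le\rho_j\le\delta\}$, have no sign (they involve $\phi''$), and are of size $\rho^{2\sigma}\rho_j^{-2}|u|^2$ there. Dominated convergence requires an integrable majorant, i.e.\ essentially $\rho^\sigma\rho_j^{-1}u\in L^2$ --- but since $\sigma<\sigma_0$ this is \emph{not} implied by the hypothesis $\rho^{\sigma_0}\langle\rho\rangle^{-1}u\in L^2$; it is (up to the bootstrap in Corollary~\ref{cor:indicial-bdy}) part of what is being proved. Bounding $\rho^{2\sigma-2\sigma_0}$ on the support of the cutoff derivatives costs a factor $\delta^{-2\sum_j(\sigma_0-\sigma_j)}$, which is not beaten by the vanishing of the tail integral, so the error neither has a sign nor visibly tends to zero. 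The argument closes once you reinstate the interpolating cutoffs with $\beta_j=\sigma_0-\sigma_j$ and verify, as the paper does, that the resulting remainder is nonnegative (this is also where $\eta\ge1$ is genuinely needed: the positive term degenerates like $\frac{\rho_j/\delta}{1+\rho_j/\delta}$ deep inside the cutoff region, and one needs $|\rho|^\eta\le\sum_j\rho_j$ to match it).
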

\begin{proof}
By assumption, $u\in X^{-\sigma_0}$, and in view of Lemma~\ref{lemma:Delta-X-r_0-loc}, we may replace $u$
by some $\psi_\alpha u$ so as to assume that $u$ has compact support in some chart. 

We now claim that acting on $\CI_c(\calM_\gamma)$, one has
\begin{equation}\begin{split}\label{eq:recentered-est}
f\Delta+\Delta f= & \ 2(\rho^{-(k-1)}\nabla\rho^{k-1})^*f(\rho^{-(k-1)}\nabla\rho^{k-1})\\
&\qquad+\Delta f-2(\rho^{-(k-1)}\nabla\rho^{k-1})^*(f\rho^{-(k-1)}(\nabla\rho^{k-1})), 
\end{split}\end{equation}
where the two occurrences of $f$ on the left as well as the last two terms on the right are multiplication
operators. Indeed, both sides are formally self-adjoint, real, and have the same principal symbol, so
the difference is an operator of order $0$; evaluation on the constant function $1$ then gives the result. 
The last term on the right in \eqref{eq:recentered-est} can be rewritten in a more useful way as follows.
First write this term as $-2\rho^{k-1}\nabla^*\rho^{-2(k-1)}f\nabla\rho^{k-1}$, then commute to the 
front the middle factor $\rho^{-2(k-1)}f$; omitting the initial minus sign, this yields
\[
\begin{split}
\qquad \qquad 2\rho^{-(k-1)}f\Delta\rho^{k-1}-2\rho^{k-1}[\nabla,\rho^{-2(k-1)}f]^*\nabla\rho^{k-1} \qquad \qquad \qquad \\
= 2\rho^{-(k-1)}f\Delta\rho^{k-1} - 2\rho^{-(k-1)}\langle\nabla f,\nabla \rho^{k-1}\rangle_g - 
2f\rho^{k-1}\langle\nabla\rho^{-2(k-1)},\nabla\rho^{k-1}\rangle_g \\
= - 2(k-1)\rho^{-1}\langle \nabla \rho, \nabla f\rangle_g + 2\rho^{-(k-1)}f\Delta\rho^{k-1} + 4(k-1)f\rho^{-k}\langle \nabla 
\rho, \nabla \rho^{k-1}\rangle_g.
\end{split}
\]
Using \eqref{celap3}, we expand and then combine the last two terms on the right; this produces a cancellation,
from which all that remains is $f \sum_{i, j} a'_{ij} \rho_i^{-2}$, for some coefficients $a_{ij}'$ which satisfy $|a'_{ij}|\leq C|\rho|^\eta$. 

On the other hand, assuming that $f$ is independent of $y$ and $\theta$, we also combine the first term of this last 
equation (remember to reinsert the minus sign!) with the penultimate term $\Delta f$ in \eqref{eq:recentered-est}. 
These together yield
\[
\sum_{i=1}^\ell \big(-\pa_{\rho_i}^2+(k-2+a_i'')\rho_i^{-1}\pa_{\rho_i}\big)f+\sum_{i,j=1}^\ell a_{ij}''\pa^2_{\rho_i \rho_j} f,
\]
where $|a_i''|,|a_{ij}''|\leq C|\rho|^{\eta}$. 

We shall use, as before, the specific function
$f=f_\delta=(\prod_{i=1}^\ell\phi_i(\rho_i/\delta)^2)\rho^{2\sigma}$,
with $\phi_i(t)=(1+t^{-1})^{-\beta_i}$. Thus
\[
\begin{split} 
-\pa_{\rho_i}^2 f_i+\frac{k-2}{\rho_i} \pa_{\rho_i} f_i  = & f_i\rho_i^{-2}\left(-2\beta_i (2\beta_i+1)(1+t)^{-2}+ \right. \\
& \left. \left. 2\beta_i(k-4\sigma_i) (1+t)^{-1}+2\sigma_i(k-1-2\sigma_i)\right)\right|_{t=\rho_i/\delta}; 
\end{split}
\]
the remainder terms $a_i'' \rho_i^{-1}\pa_{\rho_i} f$, $a_{ij}''\pa^2_{\rho_i \rho_j} f$ are all bounded by 
$|\rho|^\eta$ times a linear combination of the three terms on the right. 
Taking $\beta_i=\frac{k-1}{2}-\sigma_i$, then for $\delta>0$, 
\[
\rho^{-2\sigma_0}f,\ \rho^{-2\sigma_0}\langle \rho \rangle\pa_{\rho_i} f,\ \mbox{and}\ \ \rho^{-2\sigma_0}\langle \rho \rangle^2\Delta f
\]
are all bounded, though not uniformly in $\delta$. Furthermore,
\begin{equation*}
\begin{split}
&\sum_{i=1}^\ell(-\pa_{\rho_i}^2 f+\frac{k-2}{\rho_i} \pa_{\rho_i} f) =\sum_{i=1}^\ell 2\beta_i f \rho_i^{-2} 
\left(-(2\beta_i+1)(1+t)^{-2} \right. \\  & \qquad \qquad \left. +(2\beta_i+1-2\sigma_i)   (1+t)^{-1}+2\sigma_i \right)|_{t=\rho_i/\delta} \\
&=\sum_{i=1}^\ell 2\beta_i f\rho_i^{-2}\frac{\rho_i/\delta}{1+\rho_i/\delta}\left(\frac{2\beta_i+1}{1+\rho_i/\delta}+2\sigma_i\right).
\end{split}
\end{equation*}
The right side of this equation is nonnegative if $\beta_i>0$ and $\sigma_i\geq 0$ for all $i$, and bounded from below by
\[
\sum_i 4\beta_i\sigma_i f\rho_i^{-2}\frac{\rho_i/\delta}{1+\rho_i/\delta}.
\]
Finally, using Cauchy-Schwarz, 
\begin{multline*}
\sum_{i, j} |\pa^2_{\rho_i \rho_j} f| +  \sum_i |\rho_i^{-1} \pa_{\rho_i} f |  \\ \leq  C f
\sum_{i, j} (\rho_i \rho_j)^{-1} \left((1+t)^{-2}+ (1+t)^{-1}+1\right)|_{t=\rho_i/\delta}\leq Cf \sum_{i} \rho_i^{-2}.
\end{multline*}

We have now proved that the last two terms of \eqref{eq:recentered-est} are bounded from below by
$- C|\rho|^{\eta}\sum (\rho_i\rho_j)^{-1}$.
Since we are assuming that $\sigma_i>0$ for every $i$ and $\eta\geq 1$, we can refine this since then
$|\rho|^\eta \leq |\rho| \leq \sum_j \rho_j$.  Indeed, using these two conditions, then for any specified $\ep_0>0$,Q
\[
\sum_j \rho_j \left((1+t)^{-2}+
(1+t)^{-1}+1\right)|_{t=\rho_i/\delta}\leq 3\sum_j\rho_j\leq \sum_j \ep_0 4\beta_j\sigma_j \frac{\rho_j/\delta}{1+\rho_j/\delta}
\]
when both $\delta$ and $|\rho|$ are sufficiently small.  Taking $\supp u$ sufficiently small so that both of these
last conditions hold, this shows that the last two terms of \eqref{eq:recentered-est} are 
actually non-negative.

Now, when $\delta>0$, \eqref{eq:recentered-est} gives 
\begin{equation}\begin{split}\label{eq:recentered-est-2}
&\langle f_\delta\Delta u,u\rangle+\langle\Delta (f_\delta u),u\rangle\\
&\qquad=2\|f_\delta^{1/2}(\rho^{-(k-1)}\nabla\rho^{k-1})u\|^2\\
&\qquad\qquad+\left\langle\left(\Delta f_\delta-2(\rho^{-(k-1)}\nabla\rho^{k-1})^*(f_\delta\rho^{-(k-1)}(\nabla\rho^{k-1}))\right)u,u\right\rangle
\end{split}\end{equation}
if $u\in\CI_c(\mathring \cM)$. Both sides are continuous bilinear forms on $X^{-\sigma_0}$, so since 
$\CI_c(\mathring \cM)$ is dense in $X^{-\sigma_0}$, the identity holds in this larger space. Since $\eta \geq 1$, 
the last term is nonnegative, hence 
$$
\|f_\delta^{1/2}(\rho^{-(k-1)}\nabla\rho^{k-1})u\|^2\leq
\|\rho^{\sigma_0}\langle \rho \rangle^{-1}u\|\,\|\rho^{2\sigma-\sigma_0}\langle \rho \rangle\Delta u\|.
$$
Letting $\delta\to 0$ shows $\rho^\sigma(\rho^{-(k-1)}\nabla\rho^{k-1}) u\in L^2$ and
$$
\|\rho^\sigma(\rho^{-(k-1)}\nabla\rho^{k-1}) u\|^2\leq \|\rho^{2\sigma-\sigma_0}\langle \rho \rangle\Delta u\| \|\rho^{\sigma_0}\langle 
\rho \rangle^{-1}u\|.
$$
This completes the proof. 
\end{proof}

\medskip

\begin{cor}\label{cor:indicial-bdy}
If $\sigma_0=\frac{k-1}{2}>0$ and $0 \leq \sigma < \sigma_0$, $\eta \geq 1$, 
and if $\rho^{\sigma_0}\langle \rho \rangle^{-1}u\in L^2$ and
$\rho^\sigma\langle \rho \rangle\Delta u\in L^2$ then $u\in X^\sigma$, so $\rho^\sigma\nabla u\in L^2$ and 
$\rho^\sigma\langle \rho \rangle^{-1}u\in L^2$.

In particular, if $\rho^{\sigma_0}\langle \rho \rangle^{-1}u\in L^2$ and $\langle \rho \rangle\Delta u\in L^2$
then $\nabla u\in L^2$.
\end{cor}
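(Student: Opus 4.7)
The plan is to combine Lemmas \ref{lemma:indicial-bdy} and \ref{lemma:indicial-gap}. The borderline hypothesis $\rho^{\sigma_0}\langle\rho\rangle^{-1}u\in L^2$ cannot be fed directly into Lemma \ref{lemma:indicial-gap}, which requires $\rho^{\sigma_0'}\langle\rho\rangle^{-1}u\in L^2$ for some $\sigma_0'<\sigma_0=(k-1)/2$. The first task is therefore to upgrade the $u$-bound from the borderline exponent to a strictly subcritical one; Lemma \ref{lemma:indicial-bdy} delivers an intermediate weighted gradient estimate from which the improved $u$-bound can be extracted by integration by parts.

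Fix $\sigma'\in(\sigma,\sigma_0)$ with $\sigma'\geq\max\{\sigma_0-1/\ell,\,(\sigma+\sigma_0)/2\}$ and $\sigma'>0$; such a choice exists because $\sigma<\sigma_0$. The inequality $2\sigma'-\sigma_0\geq\sigma$ ensures that $\rho^{2\sigma'-\sigma_0}\langle\rho\rangle\Delta u\in L^2$ follows from the given $\rho^\sigma\langle\rho\rangle\Delta u\in L^2$ on any neighbourhood on which $\rho$ is bounded. Lemma \ref{lemma:indicial-bdy} then applies with parameter $\sigma'$ and yields
\[
\rho^{\sigma'-2\sigma_0}\nabla(\rho^{2\sigma_0}u)\in L^2.
\]
Decomposing this estimate into components, its $y_j$- and $\theta_i$-components give $\rho^{\sigma'}\partial_{y_j}u$ and $\rho^{\sigma'}\rho_i^{-k}\partial_{\theta_i}u$ in $L^2$ automatically, while each $\rho_i$-component is the combined expression
\[
F_i:=\rho^{\sigma'}\bigl(\partial_{\rho_i}u+2\sigma_0\,\rho_i^{-1}u\bigr)\in L^2.
\]

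Set $A_i:=\rho^{\sigma'}\partial_{\rho_i}u$ and $B_i:=\rho^{\sigma'}\rho_i^{-1}u$, so $F_i=A_i+2\sigma_0 B_i$. Integration by parts in $\rho_i$ against the volume element $\prod_j\rho_j^k$ gives the identity
\[
2\operatorname{Re}\langle A_i,B_i\rangle=-(2\sigma'+k-1)\|B_i\|^2=-2(\sigma'+\sigma_0)\|B_i\|^2,
\]
so that $\operatorname{Re}\langle F_i,B_i\rangle=(\sigma_0-\sigma')\|B_i\|^2$. Cauchy-Schwarz then yields $(\sigma_0-\sigma')\|B_i\|\leq\|F_i\|$, hence $B_i\in L^2$ and consequently $A_i\in L^2$. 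Summing over $i$ (using $\langle\rho\rangle^{-1}=\sum_i\rho_i^{-1}$) produces $\rho^{\sigma'}\langle\rho\rangle^{-1}u\in L^2$, the upgraded $u$-bound.

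With $\rho^{\sigma'}\langle\rho\rangle^{-1}u\in L^2$ and $\rho^\sigma\langle\rho\rangle\Delta u\in L^2$ in hand, Lemma \ref{lemma:indicial-gap} applies with $\sigma_0'=\sigma'$ (all constraints $-\tfrac{k-2}{4}<\sigma<\sigma'<\tfrac{k-1}{2}$ hold) and concludes $u\in X^{-\sigma}$, giving $\rho^\sigma\nabla u\in L^2$ and $\rho^\sigma\langle\rho\rangle^{-1}u\in L^2$; the particular case is just $\sigma=0$. The main obstacle is the rigorous justification of the integration by parts in the extraction step, as the boundary terms at $\rho_i=0$ need not vanish a priori. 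I would regularize $u$ with the specific cutoff $\Phi_\delta=\prod_j\phi(\rho_j/\delta)^2$, $\phi(t)=(t/(1+t))^\beta$, taken from the proof of Lemma \ref{lemma:indicial-bdy}, run the identity and Cauchy-Schwarz argument for $u_\delta=\Phi_\delta u$, control the commutator error $\|\rho^{\sigma'}u\,\partial_{\rho_i}\Phi_\delta\|$ using the a priori bound $\rho^{\sigma_0}\langle\rho\rangle^{-1}u\in L^2$, and pass $\delta\to 0$; the density statement of Corollary \ref{cor:X-r-dense} provides the functional-analytic framework for this limiting procedure.
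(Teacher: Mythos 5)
Your overall architecture is exactly the paper's: pick an intermediate weight $\sigma'\in(\sigma,\sigma_0)$ with $\sigma'\geq\max(\sigma_0-1/\ell,(\sigma+\sigma_0)/2)$ and $\sigma'>0$, feed $\rho^{2\sigma'-\sigma_0}\langle\rho\rangle\Delta u\in L^2$ into Lemma~\ref{lemma:indicial-bdy} to get $\rho^{\sigma'}\rho_j^{-(k-1)}\pa_{\rho_j}(\rho_j^{k-1}u)\in L^2$, upgrade this to $u\in X^{-\sigma'}$, and finish with Lemma~\ref{lemma:indicial-gap}. The place where you deviate is the upgrade step, and that is precisely where your justification does not close. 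Your integration-by-parts identity $\operatorname{Re}\langle F_i,B_i\rangle=(\sigma_0-\sigma')\|B_i\|^2$ is formally correct (and even yields the sharp constant $1/(\sigma_0-\sigma')$), but it is a \emph{reverse} Hardy inequality: in the notation of Lemma~\ref{lem-hardy}, applied to $w=\rho_i^{k-1}u$ with $\beta=\sigma'-(k-1)$ and $\alpha=k$, one has $2\beta+\alpha-1=2(\sigma'-\sigma_0)<0$, so the inequality goes the wrong way unless the boundary contribution at $\rho_i=0$, namely $\lim_{\ep\to0}\ep^{2\sigma_i'+k-1}\int|u|^2$, vanishes. Your proposed control of the corresponding commutator/boundary error by the a priori bound $\rho^{\sigma_0}\langle\rho\rangle^{-1}u\in L^2$ cannot work as stated: since $\sigma'<\sigma_0$, the weight $\rho^{\sigma'}$ is \emph{larger} than $\rho^{\sigma_0}$ near the divisors, and the error term $\|\rho^{\sigma'}u\,\pa_{\rho_i}\Phi_\delta\|$ is comparable (on the shell $\rho_i\sim\delta$) to $\delta^{\sigma'-\sigma_0}$ times a quantity controlled by the hypothesis; the product is indeterminate as $\delta\to0$. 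For the monotone cutoffs of Lemma~\ref{lem-hardy} the derivative term likewise enters with the unfavourable sign in the reversed regime.

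The missing ingredient — which the paper supplies with its appeal to ``regular singular operator theory'' — is that the first-order operator $\pa_{\rho_j}+(k-1)\rho_j^{-1}=\rho_j^{1-k}\pa_{\rho_j}\rho_j^{k-1}$ has homogeneous solution $\rho_j^{1-k}$, which fails (for $k=3$, just barely) to lie in $L^2(\rho_j^k\,d\rho_j)$. Hence on almost every fibre the hypothesis $u\in L^2$ forces $\rho_j^{k-1}u=\int_0^{\rho_j}t^{k-1}h(t)\,dt$ with $\rho^{\sigma'}h\in L^2$, and Cauchy--Schwarz then gives the quantitative decay $\rho_j^{k-1}|u|=o(\rho_j^{\sigma_0-\sigma_j'+(k-1)/2})$ needed either to kill your boundary term or, as the paper does, to conclude directly that $\rho^{\sigma'}\pa_{\rho_j}u\in L^2$ and then apply the \emph{forward} Hardy inequality (valid since $2\sigma'+k>1$) to obtain $\rho^{\sigma'}\langle\rho\rangle^{-1}u\in L^2$. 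If you insert this ODE argument in place of your cutoff-commutator scheme, your proof is complete and agrees with the paper's; without it, the extraction step is a genuine gap.
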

\begin{proof}
By Lemma~\ref{lemma:indicial-bdy}, if $\sigma'=\max(\sigma_0-1/\ell , \sigma_0-\frac{1}{2}(\sigma_0-\sigma))<\frac{k-1}{2}$,
$\sigma' > 0$,  
then in particular $2\sigma'-\sigma_0\geq \sigma$, and hence $\rho^{2\sigma'-\sigma_0}\langle \rho \rangle\Delta u\in L^2$, 
and we also have $\rho^{\sigma'}\rho^{-(k-1)}\nabla\rho^{k-1}u\in L^2$. This implies that 
\[
(\prod_{i\neq j}\rho_i^{\sigma'})\rho_j^{\sigma'}\rho_j^{-(k-1)}\pa_{\rho_j}\rho_j^{k-1}u\in L^2.
\]
Since $\rho_j^{1-k}$ barely fails to lie in $L^2$, while $u\in L^2$, we can use techniques of regular singular operator theory to
get that $\rho^{\sigma'}\pa_{\rho_j}u \in L^2$ for every $j$, and hence using the Hardy inequality, 
$\rho^{\sigma'}\langle \rho \rangle^{-1}u\in L^2$. This means that $u\in X^{-\sigma'}$.  Finally, by Lemma~\ref{lemma:indicial-gap} 
with $\sigma_0=\sigma'$, $\rho^\sigma\nabla u\in L^2$ and $\rho^\sigma\langle \rho \rangle^{-1}u\in L^2$, 
and this completes the proof. 
\end{proof}

We now show how to use the results above to establish that $\Delta$ is essentially self-adjoint.  
It is well-known that essential self-adjointness is equivalent to the density of $\Ran_{\CI_c}(\Delta\pm i)$ in $L^2$. 
If either of these spaces fail to be dense, then there exists an element $\psi\in L^2$ such that $\psi \perp \Ran_{\CI_c}(\Delta\pm i)$, 
i.e.\ $\langle (\Delta\pm i)\phi,\psi\rangle=0$ for all $\phi\in\CI_c$. This implies, in particular, that $\psi$ is a distributional solution 
of $(\Delta\pm i)\psi=0$, and hence $\psi\in\CI$, and in addition $\Delta\psi\in L^2$. 

If we can integrate by parts to justify the identity 
$$
0=\langle (\Delta\pm i)\psi,\psi\rangle=\|d\psi\|^2\pm i\|\psi\|^2,
$$
we could then conclude that $\psi=0$.  Thus it remains to prove the 
\begin{lem}
Suppose that $k\geq 3$, and let $g$ be a metric which satisfies the conditions \eqref{eq:metric-terms} and
\eqref{newconditions}, with $\eta \geq 1$ if $k=3$.  If $u\in L^2$ and $\Delta u\in L^2$, then $\nabla u\in L^2$ and
$\langle\Delta u,u\rangle=\langle \nabla u, \nabla u\rangle$.
\label{Tuesday}
\end{lem}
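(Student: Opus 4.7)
The plan is to first upgrade the regularity of $u$ to show $\nabla u \in L^2$ (in fact $u \in X^0$), and then justify the integration-by-parts identity by a density argument. The work is local by Lemmas~\ref{lemma:X-r-loc} and~\ref{lemma:Delta-X-r_0-loc}: multiplying by an element of the partition of unity, it suffices to treat $u$ supported in a single product polar coordinate chart around some intersection $D_J$. The initial foothold is the remark following Lemma~\ref{lem:interpolate}, which guarantees that the hypotheses $u, \Delta u \in L^2$ place $u$ in $X^{-\sigma}$ for every multi-index $\sigma$ with $\sigma_i \geq 1$; in particular $u \in X^{-\mathbf{1}}$ where $\mathbf{1} = (1,\dots,1)$.

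The next step is to drive the weight all the way down to $\sigma = 0$. For $k \geq 4$ this is immediate from Lemma~\ref{lemma:indicial-gap}: since $1 < \tfrac{k-1}{2}$, a finite iteration of that lemma starting from $\sigma_0 = \mathbf{1}$ lowers $\sigma$ to any value with $\sigma_i > -\tfrac{k-2}{4}$, and in particular yields $u \in X^0$. For the borderline case $k = 3$, one has $\tfrac{k-1}{2} = 1$ and Lemma~\ref{lemma:indicial-gap} no longer applies directly. Instead one invokes Corollary~\ref{cor:indicial-bdy} with $\sigma_0 = 1$ and $\sigma = 0$: on the coordinate chart the $\rho_j$ are bounded, so the hypotheses $\rho^{\sigma_0}\langle\rho\rangle^{-1} u \in L^2$ and $\langle\rho\rangle\Delta u \in L^2$ both reduce to $u, \Delta u \in L^2$, while the additional assumption $\eta \geq 1$ is exactly what the statement of the lemma grants in this case. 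The conclusion is again $u \in X^0$, giving both $\nabla u \in L^2$ and $\langle\rho\rangle^{-1} u \in L^2$.

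With $u \in X^0$ in hand, Corollary~\ref{cor:X-r-dense} produces a sequence $u_n \in \calC^\infty_0(\calM_\gamma)$ with $u_n \to u$ in $X^0$. Applying Lemma~\ref{lem:interpolate} to the differences $u_n - u_m$ shows that $\{\nabla u_n\}$ is Cauchy in $L^2$, and by distributional convergence its limit is $\nabla u$. Each $u_n$ has compact support in the smooth part of $\calM_\gamma$, so a routine integration by parts yields $\langle \Delta u_n, u_n\rangle = \|\nabla u_n\|^2$. Rewriting the left side as $\langle \langle\rho\rangle \Delta u_n,\langle\rho\rangle^{-1} u_n\rangle$ and using the $X^0$-convergence, it tends to $\langle \Delta u, u\rangle$, while the right side tends to $\|\nabla u\|^2$, establishing the identity.

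The principal obstacle is the borderline case $k = 3$, where the generic bootstrap of Lemma~\ref{lemma:indicial-gap} runs out of room at precisely the index $\sigma_0 = 1$. Routing through Corollary~\ref{cor:indicial-bdy} is the point at which the additional hypothesis $\eta \geq 1$ is genuinely used; this condition is built into the hypotheses of the lemma in this case and, as noted earlier in the paper, is satisfied by $g_{\WP}$.
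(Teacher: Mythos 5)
Your proposal is correct and follows essentially the same route as the paper: reduce to $u\in X^{-\mathbf 1}$, apply Lemma~\ref{lemma:indicial-gap} with $\sigma_0=1$, $\sigma=0$ when $k>3$ and Corollary~\ref{cor:indicial-bdy} (using $\eta\geq 1$) when $k=3$ to get $u\in X^0$, then extend $\langle\Delta v,v\rangle=\|\nabla v\|^2$ from $\calC^\infty_0$ by density in $X^0$. The only differences are cosmetic — you spell out the iteration already built into Lemma~\ref{lemma:indicial-gap} and verify the continuity of the bilinear forms by hand via the Cauchy argument, which the paper states more compactly.
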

\begin{proof}
If $k>3$ then, by Lemma~\ref{lemma:indicial-gap} with $\sigma_0=1$ and $\sigma=0$, we
see that $\nabla u\in L^2$ and $\langle \rho \rangle^{-1}u\in L^2$. This proves the first claim.

Now suppose that $k=3$, which is the most relevant case. Then by Lemma~\ref{cor:indicial-bdy} 
with $\sigma_0=1$ and $\sigma=0$, we see that $\nabla u\in L^2$ and $\langle \rho \rangle^{-1}u\in L^2$.
This completes the proof of the first claim in all cases. 

Finally, if $v\in\CI_c(\mathring \cM)$, then 
$$
\langle\Delta v,v\rangle=\|\nabla v\|^2.
$$
Both sides are continuous bilinear forms on $X^0$, so by the density of 
$\CI_c(\mathring \cM_\gamma)$, this identity 
remains valid in $X^0$. In part one we proved that the assumption $u\in L^2$
and $\Delta u\in L^2$ implies that $\nabla u\in L^2$ and $\langle\rho\rangle^{-1}
u\in L^2$. Thus $u\in X^0$ and hence, the above equality holds for $u$.
\end{proof}

In summary, we proved the
\begin{thm}
$\Delta$ is essentially self-adjoint.
\end{thm}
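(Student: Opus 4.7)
The plan is simply to package together the discussion and lemma that immediately precede the theorem. Essential self-adjointness of the symmetric operator $\Delta$ on $\calC^\infty_0(\calM_\gamma)$ is equivalent to the density of both ranges $\Ran(\Delta \pm i)$ in $L^2(\calM_\gamma, dV_{\WP})$. So I will argue by contradiction: suppose that one of these ranges is not dense, and pick a nonzero $\psi\in L^2$ with $\langle (\Delta\pm i)\phi,\psi\rangle = 0$ for all $\phi\in\calC^\infty_0(\calM_\gamma)$.

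Testing against smooth compactly supported test functions shows that $(\Delta\mp i)\psi = 0$ distributionally on $\mathring\calM_\gamma$. Elliptic regularity in the smooth interior gives $\psi\in\calC^\infty(\mathring\calM_\gamma)$, and from the equation we also get $\Delta\psi = \pm i\psi \in L^2$, so $\psi\in\calD_{\max}(\Delta)$. This is precisely the hypothesis of Lemma~\ref{Tuesday}, which is the entire analytic content of what precedes: it tells us both that $\nabla\psi\in L^2$ and that the Green identity
\[
\langle \Delta\psi,\psi\rangle \;=\; \langle \nabla\psi,\nabla\psi\rangle \;=\; \|\nabla\psi\|^2
\]
holds without any boundary correction. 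Note that this is the step where the restriction $k\geq 3$, and in the borderline case $k=3$ the assumption $\eta\geq 1$, is needed; but both conditions hold for $g_{\WP}$.

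Combining the Green identity with the eigenvalue relation gives
\[
\|\nabla\psi\|^2 \;=\; \langle \Delta\psi,\psi\rangle \;=\; \pm i\|\psi\|^2.
\]
The left side is real and nonnegative while the right side is purely imaginary, so both sides vanish. In particular $\|\psi\|=0$, contradicting the choice of $\psi$. Hence $\Ran(\Delta+i)$ and $\Ran(\Delta-i)$ are both dense in $L^2$ and $\Delta$ is essentially self-adjoint.

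The only conceivable obstacle is the justification of the integration by parts, which is exactly what forced the lengthy development of the Hardy and interpolation estimates, the weighted spaces $X^{-\sigma}$, and the indicial-gap arguments of Lemmas~\ref{lemma:indicial-gap}--\ref{cor:indicial-bdy}; once Lemma~\ref{Tuesday} is in hand, the proof of the theorem reduces to the few lines above.
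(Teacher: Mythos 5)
Your proposal is correct and follows essentially the same route as the paper: deficiency elements $\psi\perp\Ran(\Delta\pm i)$, interior elliptic regularity to place $\psi$ in $\calD_{\max}$, and then Lemma~\ref{Tuesday} to justify the Green identity $\langle\Delta\psi,\psi\rangle=\|\nabla\psi\|^2$, forcing $\psi=0$. Nothing is missing.
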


To conclude this section, we note that the argument of Lemma~\ref{Tuesday} shows that the maximal
domain $\calD_{\max}(\Delta)$ is contained in $\langle \rho \rangle L^2 \cap H^1$, which is certainly
compactly contained in $L^2$. This proves Corollary~\ref{discspec}.

\section{The Weyl estimate for the eigenvalues}
In this final section we address the question of estimating the growth rate of the counting function
\[
N(\lambda) = \# \{j: \lambda_j \leq \lambda\}.
\]
The classical formula, valid for the Laplacian on a compact smooth manifold $(M,g)$, states that
\begin{equation}
N(\lambda) =\frac{\omega_n}{(2\pi)^n} \mathrm{Vol}_g(M) \lambda^{n/2} + 
o(\lambda^{n/2}), 
\label{Weyl}
\end{equation}
where $n=\dim M$ and $\omega_n$ is the volume of the unit ball in $\RR^n$. 
It is now well-understood, of course, that a good estimate of the error term, for example showing that it has
the form $\calO(\lambda^{n/2 - \epsilon})$ for some $\epsilon > 0$, depends on the dynamical properties of
the geodesic flow. The same is certainly true here, but we content ourselves with the most basic
result in this direction, which is the extension of this Weyl counting formula to the present setting. Our main result is the
\begin{thm}
Let $(M,g)$ be a compact stratified space with crossing cusp singularities of multi-order $k$, where
each $k_i \geq 3$. Then the spectrum of $\Delta_g$ is discrete, and the counting function $N(\lambda)$ for 
this spectrum satisfies the asymptotic formula \eqref{Weyl}.
\end{thm}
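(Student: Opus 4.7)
The plan is to prove the two-sided Weyl asymptotic via Dirichlet--Neumann bracketing, relative to a decomposition $M = M_\epsilon \sqcup U_\epsilon$, where $M_\epsilon = \{p \in M : \mathrm{dist}_g(p, \mathrm{sing}(M)) \geq \epsilon\}$ is a compact smooth manifold with boundary, and $U_\epsilon$ is a shrinking neighbourhood of the singular strata which decomposes via the product polar coordinate charts of \S 2. The global Laplacian, restricted to $M_\epsilon$ or to $U_\epsilon$ with either Dirichlet or Neumann boundary conditions along the common interface $\partial M_\epsilon$, has discrete spectrum by the same argument as Corollary~\ref{discspec} (indeed the essential self-adjointness proof is local near the singular strata and the Friedrichs extension handles the artificial boundary). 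Denoting the associated counting functions by $N^D$ and $N^N$, the standard bracketing inequality is
\[
N^D_{M_\epsilon}(\lambda) + N^D_{U_\epsilon}(\lambda) \leq N(\lambda) \leq N^N_{M_\epsilon}(\lambda) + N^N_{U_\epsilon}(\lambda).
\]

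For the lower bound, I would simply drop $N^D_{U_\epsilon}(\lambda) \geq 0$ and apply the classical Weyl asymptotic on the compact smooth manifold with boundary $M_\epsilon$, yielding $\liminf \lambda^{-n/2} N(\lambda) \geq \frac{\omega_n}{(2\pi)^n} \mathrm{Vol}_g(M_\epsilon)$; since $g_{\WP}$ has finite total volume and $\mathrm{Vol}_g(U_\epsilon) \to 0$ as $\epsilon \to 0$, letting $\epsilon \to 0$ closes the lower inequality. The first summand in the upper bracket is handled identically, contributing $\frac{\omega_n}{(2\pi)^n} \mathrm{Vol}_g(M_\epsilon)\lambda^{n/2} + o(\lambda^{n/2})$ to the upper bound. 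The entire content of the proof therefore reduces to establishing a uniform estimate of the form
\[
\limsup_{\lambda \to \infty} \lambda^{-n/2} N^N_{U_\epsilon}(\lambda) \leq C\,\mathrm{Vol}_g(U_\epsilon),
\]
since $\mathrm{Vol}_g(U_\epsilon) \to 0$ then closes the upper inequality upon sending $\epsilon \to 0$.

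To prove this singular estimate, I would further subdivide each product polar coordinate chart $(0,\epsilon)^\ell \times (S^1)^\ell \times \mathcal V$ into a finite union of cuboids on which the metric is arbitrarily close to the model $g_{\ell,k}$ of \eqref{cuspedgek}, so that after one more bracketing step it suffices to bound Neumann counting for the model Laplacian $\Delta_{\ell,k}$ on such a box with the weighted volume form $\rho_1^k \cdots \rho_\ell^k\,d\rho\,d\theta\,dy$. On a model box the operator separates: expanding in Fourier modes $e^{im\cdot\theta}$ along the torus reduces the problem to a family of radial-times-divisor operators $-\sum_i(\partial_{\rho_i}^2 + k\rho_i^{-1}\partial_{\rho_i} + m_i^2\rho_i^{-2k}) + \Delta_{D_J}$ on a product domain, and for each fixed $m$ I would apply the classical Weyl law for the resulting Riemannian problem on the manifold with boundary (the weights $\rho_i^k$ being smooth and positive away from $\rho_i=0$). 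Summing over the finite set of $m$ for which the radial ground-state eigenvalue is $\leq \lambda$, and performing the elementary phase-space count, yields the bound $N^N_{\mathrm{box}}(\lambda) \leq \frac{\omega_n}{(2\pi)^n}\mathrm{Vol}_g(\mathrm{box})\lambda^{n/2}(1+o(1))$ up to harmless lower-order corrections coming from the degeneration.

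The main obstacle is controlling the contribution of the high $\theta$-frequencies: since the effective $\theta$-mass is $m^2/\rho^{2k}$, modes with large $|m|$ only contribute eigenvalues below $\lambda$ from the region $\rho \gtrsim (|m|/\sqrt\lambda)^{1/k}$, and one must verify that the resulting phase-space volume truly reproduces (and does not exceed) the Riemannian volume factor $\mathrm{Vol}_g(\mathrm{box})$ despite the cusp degeneration. This is where the hypothesis $k \geq 3$ plays a role, guaranteeing that the weighted volume $\int \rho^k\,d\rho$ is integrable and that the sum over $m$ converges with the desired constant. Once this uniform bound on $N^N_{U_\epsilon}(\lambda)$ is established, the upper and lower asymptotics coincide and the theorem follows by the $\epsilon \to 0$ limit.
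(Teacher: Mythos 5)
Your overall framework---Dirichlet--Neumann bracketing against a decomposition into a smooth core $M_\epsilon$ and a thin singular collar $U_\epsilon$, with the classical Weyl law on the core and a volume-proportional upper bound on the collar---is exactly the paper's strategy. The divergence, and the gap, is in how the collar estimate is actually proved. You reduce to the model box and propose to separate variables in $\theta$, then ``apply the classical Weyl law for the resulting Riemannian problem on the manifold with boundary'' for each Fourier mode. But the box $(0,\epsilon)^\ell\times\mathcal V$ with density $\rho^k\,d\rho\,dy$ is not a compact manifold with positive smooth density: the weight vanishes on $\{\rho_i=0\}$, the classical Weyl law does not apply there, and the entire difficulty of the theorem is concentrated in precisely that region. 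You then name this as ``the main obstacle'' and assert that the phase-space count ``yields the bound'' and that the sum over $m$ ``converges with the desired constant''---but this is the statement to be proved, not a consequence of anything you have established. What is missing is a rigorous eigenvalue-counting bound, uniform in the Fourier mode $m$, for the singular Sturm--Liouville operators $-\partial_\rho^2-k\rho^{-1}\partial_\rho+m^2\rho^{-2k}$ on $L^2((0,\epsilon);\rho^k d\rho)$ with no boundary condition at $\rho=0$ (for $m=0$ this already requires the self-adjointness analysis to make sense of the Neumann problem). The paper avoids this by a different device: a dyadic decomposition of $(0,\delta)^\ell$ in which each non-terminal box carries a metric uniformly quasi-isometric to a flat one (so lattice-point counting applies), while the terminal box touching $\rho=0$ is controlled by the one-dimensional Hardy inequality, which forces the Rayleigh quotient there to exceed $c\,2^{2m}\sim\lambda$ on the orthocomplement of finitely many modes. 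Some such mechanism is needed in your argument and is absent.

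A secondary error: your explanation of where $k\ge 3$ enters is wrong. Integrability of $\int_0^\epsilon\rho^k\,d\rho$ holds for every $k>-1$ and gives nothing. In the Weyl argument the order of the cusp enters through the Hardy inequality (which for the weight $\rho^k\,d\rho$ with $\beta=0$ requires $k>1$) and through the convergence of the dyadic sum $\sum_\mu 2^{-(1+k)\mu}$; the hypothesis $k\ge 3$ is really needed earlier, for essential self-adjointness and hence for the discreteness assertion. Relatedly, your bracketing step presupposes that the Neumann problem on $U_\epsilon$ has discrete spectrum, which is not supplied by Corollary~\ref{discspec} alone and is essentially equivalent to the compactness/counting estimate you are trying to prove; the safer formulation is the one via min-max on the $H^1$ spaces, where an a priori infinite Neumann counting function is ruled out by the estimate itself.
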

\begin{proof}
We follow the most classical method, known as Dirichlet-Neumann bracketing. We briefly recall the idea, 
but refer to \cite{Chavel} for details. Consider a partition of $\calM$ into the region 
\[
\calW_0 = \{p: \mbox{dist}_{g}(p, \cup D_j) \geq \delta\}
\]
and a finite number of regions $\calW_{\alpha}$, each of the form
\begin{multline*}
\{p = (r_1, \ldots, r_\ell, \theta_1, \ldots, \theta_\ell, y):  0 \leq r_i \leq \delta,\ i = 1, \ldots, \ell,\  \\ 
\theta_i \in S^1\ \forall\, i,\ \mbox{and}\ y \in \calV_\alpha \subset \RR^{6\gamma-6-2\ell}\}.
\end{multline*}

Next, define the Rayleigh quotient for $\Delta_g$ by $R(u) = D(u)/L(u)$, where $L(u) = \int |u|^2\, dV_g$ 
and $D(u) = \int |\nabla u|^2\, dV_g$ are the $L^2$ norm and the Dirichlet form.  We also consider the
restrictions of these forms to various subdomains $\calW$, and will denote these by $R^\calW(u)$, etc. 

For each $\alpha$, including $\alpha = 0$, restrict $R$ to functions $u$ which lie in $H^1(\calW_\alpha)$ or to 
$H^1_0(\calW_\alpha)$. The critical values of this restricted functional are the Neumann and Dirichlet
eigenvalues for $\Delta_g$ on $\calW_\alpha$, which we list in order, and with multiplicity, as
\[
\mu_{1,N}^\alpha \leq \mu_{2,N}^\alpha \leq \ldots \ \ \mbox{and}\qquad \mu_{1,D}^\alpha \leq \mu_{2,D}^\alpha \leq \ldots,
\]
respectively. The union of these lists of eigenvalues over all $\alpha$, reindexed into nondecreasing sequences, become
\[
\mu_{1,N} \leq \mu_{2,N} \leq \ldots \ \ \mbox{and}\qquad \mu_{1,D} \leq \mu_{2,D} \leq \ldots
\]
Since 
\[
\bigoplus_\alpha H^1_0(\calW_\alpha) \subset H^1(\calM) \subset \bigoplus_\alpha H^1(\calW_\alpha),
\]
the minimax characterization of eigenvalues shows that the eigenvalues $\{\lambda_j\}$ of $\Delta_g$ on
$\calM$, listed with multiplicity, satisfy 
\[
\mu_{j,N} \leq \lambda_j \leq \mu_{j,D}
\]
for all $j$. This implies, in turn, that if we denote the counting functions for the sequences $\{\mu_{j,N}\}$ and $\{\mu_{j,D}\}$
by $N_N(\lambda)$ and $N_D(\lambda)$, respectively, then for $\lambda \gg 0$, 
\[
N_D(\lambda) \leq N(\lambda) \leq N_N(\lambda). 
\]
Thus to prove the main assertion, it suffices to prove that both $N_N$ and $N_D$ satisfy this same asymptotic law. 

We take as given that
\[
N_{N/D}^{\calW_0}(\lambda) =\frac{\omega_n}{(2\pi)^n} \mathrm{Vol}_g(\calW_0) 
\lambda^{n/2} + o(\lambda^{n/2}).
\]
This is just the standard Weyl law for the region $\calW_0$ with either Neumann or Dirichlet eigenvalues. 
Since $\mathrm{Vol}_g(\calW_0) \to \mathrm{Vol}_g(M)$ as $\delta\to 0$,
 this is nearly the entire leading term.
On the other hand, since $N_D^{\calW_\alpha}(\lambda) \leq N_N^{\calW_\alpha}(\lambda)$ for $\alpha \neq 0$, 
it suffices to prove that for any $\epsilon > 0$ we can choose a sufficiently small $\delta$ so that for all such $\alpha$, 
\[
N_N^{\calW_\alpha}(\lambda) \leq \epsilon \lambda^{n/2} + \calO(\lambda^\beta)
\]
for some $\beta < n/2$. Thus we concentrate on this last estimate for any fixed $\alpha$. 

Since the Rayleigh quotient is changed by at most a bounded factor if we replace $g$ by any quasi-isometric
metric, we may replace $\calW_\alpha$ by the product $(0,\delta)^\ell \times Z$, $Z = (S^1)^\ell \times B$,
where $B$ is a piecewise smooth compact domain in $\RR^{n-2\ell}$, endowed with the warped product metric
\[
g_\ell =\sum_{i=1}^\ell (d\rho_i^2+\rho_i^{2k_i}d\theta^2)+dy^2,
\]
Using radial and angular variables $\rho_i$ and $\theta_i$, $i = 1, \ldots, \ell$, as before, as well as the
multi-index notation, so that for example $\rho^k = \prod_{i=1}^l\rho_i^{k_i}$, then
\[
L^{\calW_\alpha}(u) = \int_0^\delta \cdots \int_0^\delta \int_Z |u|^2 \rho^k \,d\rho\, d\theta\, dy,
\]
and 
\begin{multline*}
D^{\calW_\alpha}(u) \\
= \int_0^\delta \cdots \int_0^\delta \int_Z \left(\sum_{j=1}^\ell |\pa_{\rho_j}u|^2+\sum_{j=1}^\ell \rho_j^{-2k_j}|\pa_{\theta_j}u|^2
+|\pa_y u|^2\right) \rho^k \,d\rho\,d\theta\,dy.
\end{multline*}

Next, suppose that $\delta=2^{-m_0}$ for some $m_0$ which will be fixed later, and decompose the cube $(0,\delta)^\ell$ 
into a finite union of subregions
\[
(0, 2^{-m_0})^\ell = \bigsqcup_{\mu} I^\mu,\quad \mbox{where}\qquad I_\mu = I_{\mu_1} \times \ldots \times I_{\mu_\ell}.
\]
Here $\mu$ varies over all multi-indices $(\mu_1, \ldots, \mu_\ell)$ with $\mu_i \in \{m_0, \ldots, m+1\}$ 
for some $m$ to be chosen momentarily, and 
\[
I_j = (2^{-j-1}, 2^{-j}) \ \ \mbox{for}\ \  m_0 \leq j \leq m, \quad \mbox{and}\qquad I_{m+1} = (0, 2^{-m-1}). 
\]
In other words, this is just the dyadic decomposition of $(2^{-m-1}, 2^{-m_0})$ along with the `terminal' interval $(0, 2^{-m-1})$. 

Now fix $\lambda \gg 0$ and set $m \sim \frac12 \log_2 \lambda$.  We claim that if the multi-index $\mu$ has
$\mu_j = m+1$ for some $j$, then the number of Neumann eigenvalues of $\Delta_{g_\ell}$ on the region $I_\mu \times Z$ 
is bounded by the number of Neumann eigenvalues on the adjacent domain $I_{\mu'} \times Z$, where $\mu'_j = m$ 
and all other $\mu_i' = \mu_i$.  As we show below, it is possible to directly estimate the counting functions
on these non-terminal regions, and since we now show that the counting functions on the terminal regions are 
estimated in terms of these, we will have accounted for every block in this decomposition. 

To prove this we integrate in $\rho_j \in (0, 2^{-m})$ (for the same value of $j$). Writing $Y = \prod_{i \neq j} I_{\mu_i} \times Z$, 
then the second part of Lemma~\ref{lem-hardy} with $\beta = 0$, 
$\rho_0 = 2^{-m-1}$ and $\rho_0 + \epsilon = 2^{-m}$ gives that
\begin{multline*}
2^{2m+2} \int_0^{2^{-m-1}} |u|^2\, d\rho_j \leq \int_{0}^{2^{-m}} |\rho_j^{-1}u|^2 \, d\rho_j  \\ \leq 
C \int_0^{2^{-m}}  |\pa_{\rho_j}u|^2 \, d\rho_j + 2^{2m} \int_{2^{-m-1}}^{2^{-m}} |u|^2\, d\rho_j.
\end{multline*}
The constant in front of the last term on the right comes from the square of the derivative of the cutoff function 
used to reduce back to a function which vanishes near $\rho_j = \rho_0 + \epsilon$ so as to apply
the Hardy inequality for such functions. Integrating over the other factor $Y$, we obtain
\[
2^{2m+2} \int_{I_\mu \times Y} |u|^2 \leq C \int_{(I_\mu \cup I_{\mu'}) \times Y} 
|\nabla u|^2 + C 2^{2m} \int_{I_{\mu'} \times Y} |u|^2. 
\]

Now suppose that the restriction of $u$ to $I_{\mu'} \times Y$ is orthogonal to 
all the Neumann eigenfunctions 
with eigenvalues less than $C 2^{2m}$ on this region. We can then estimate the 
second term on the right by the Dirichlet integral
of $u$ on $I_{\mu'} \times Y$. So altogether, for such $u$,
\[
2^{2m+2} \int_{(I_\mu \cup I_{\mu'}) \times Y} |u|^2 \leq C \int_{(I_\mu 
\cup I_{\mu'}) \times Y} |\nabla u|^2,
\]
hence the Rayleigh quotient for all such $u$ satisfies $R(u) \geq C 2^{2m+2}$. 
This proves that
\[
N_N^{(I_\mu \cup I_{\mu'}) \times Y} (2^{2m+2}) \leq N_N^{ I_{\mu'} \times Y}(C 2^{2m+2}), 
\]
which proves the claim since $\lambda \sim 2^{2m}$. 

We have now reduced to estimating $N_N^{I_\mu \times Y}(\lambda)$ for any 
multi-index $\mu$ where $\mu_j \neq m+1$ 
for any $j$. To accomplish this, we consider the Rayleigh quotient on this 
region. Writing $ \mu \cdot k 
= \sum \mu_j k_j$, then 
\[
D(u) = \int (\sum_{j=1}^\ell(|\pa_{\rho_j} u|^2+2^{2k_j \mu_j}|\pa_{\theta_j} u|^2)
+|\pa_y u|^2) 2^{-\mu \cdot k} \,d\rho\,d\theta\,dy,
\]
and $L(u) = \int |u|^2 2^{-\mu \cdot k}\,d\rho\,d\theta\,dy$. Up to a 
$\mu$-independent constant factor, 
the Rayleigh quotient in this region is the same as the one for 
\[
\sum_{j=1}^\ell \pa^2_{\rho_j}+\sum_{j=1}^\ell 2^{2k_j \mu_j}\pa_{\theta_j}^2+\pa_y^2.
\]
Up to another constant factor, the eigenvalues of this problem are simply 
\[
\sum_{j=1}^\ell 2^{2\mu_j} \xi_j^2+\sum_{j=1}^\ell 2^{2 \mu_j k_j} \zeta_j^2+|\eta|^2,
\ \xi, \zeta \in\ZZ^\ell,\  \eta \in\ZZ^{n-2\ell}.
\]
The number of these which are no larger than $\lambda$ is estimated from above 
by the number of multi-indices 
$\xi$, $\zeta$, $\eta$ such that the individual summands themselves are less 
than $\lambda$.  Thus
\begin{multline*}
\#\{ \xi_j: 2^{2\mu_j}\xi_j^2 \leq \lambda\} \leq C \sqrt{\lambda} 2^{-\mu_j},\\ 
\#\{ \zeta_j: 2^{2k_j \mu_j}\zeta_j^2 \leq \lambda\} \leq C \sqrt{\lambda} 2^{-\mu_j k_j}, \\
\mbox{and} \qquad \# \{\eta_j: \eta_j^2 \leq \lambda\} \leq C \sqrt{\lambda}. 
\end{multline*}
Thus, summing over all $\mu$ (with no $\mu_j = m$), and recalling that $\lambda$ is large, 
the number of eigenvalues of these model problems on the various regions $I_\mu \times Z$ 
which are less than $\lambda$ is bounded by
\[
C\left( \sum_\mu 2^{-(1+k_j)\mu_j} \right) \lambda^{n/2}.
\]
The coefficient breaks into the product
\[
\prod_{j=1}^\ell \left(\sum_{\mu_j = m_0}^{m}2^{-(1+k_j) \mu_j} \right)  \leq \prod_{j=1}^{\ell} 2^{-(1+k_j) m_0} = 2^{-(\ell + |k|)m_0} 
= \delta^{\ell + |k|} 
\]
Altogether, we have proved that the counting function on each $\calW_\alpha$, $\alpha \neq 0$, is bounded
by $C\delta^\beta \lambda^{n/2}$ where $\beta = \ell + |k|$. 

We have now shown that
\[
N(\lambda) =\frac{\omega_n}{(2\pi)^n} \left(\mathrm{Vol}_g(M) + \calO(\delta^\beta)\right)\lambda^{n/2} + o(\lambda^{n/2}). 
\]
If we choose $\delta = (\log_2 \lambda)^{-1/\beta}$, or equivalently, $m_0 = (1/\beta) \log_2 \lambda$, then
since this $m_0$ is still far less than $m = \frac12 \log_2 \lambda$, we conclude that 
\[
N(\lambda) =\frac{\omega_n}{(2\pi)^n} \mathrm{Vol}_g(M)\lambda^{n/2} + o(\lambda^{n/2}), 
\]
which finishes the proof.
\end{proof}

\end{document}